\numberwithin{equation}{section}
\newtheorem{theorem}{Theorem}
\newtheorem{corollary}[theorem]{Corollary}
\theoremstyle{remark}
\numberwithin{theorem}{section} \numberwithin{equation}{section}
\numberwithin{figure}{section}
\renewcommand{\o}{\overline}
\newcommand{\field}[1]{\mathbb{#1}}
\newcommand{\C}{\field{C}}
\newcommand{\N}{\mathbb{N}}
\begin{document}
\title{Generalizations of Capparelli's identity}
\author{Jehanne Dousse}
\address{Institut f\"ur Mathematik, Universit\"at Z\"urich\\ Winterthurerstrasse 190, 8057 Z\"urich, Switzerland}
\email{jehanne.dousse@math.uzh.ch}

\author{Jeremy Lovejoy}
\address{CNRS, Universit\'e Denis Diderot - Paris 7, Case 7014, 75205 Paris Cedex 13, France}
\email{lovejoy@math.cnrs.fr}

\thanks{The first author is supported by the Forschungskredit of the University of Zurich, grant no. FK-16-098. The authors thank the University of Zurich and the French-Swiss collaboration project no. 2015-09 for funding research visits during which this research was conducted.}

\subjclass[2010] {11P84, 11P81, 05A17, 05A30}

\begin{abstract}
Using jagged overpartitions, we give three generalizations of a weighted word version of Capparelli's identity due to Andrews, Alladi, and Gordon and present several corollaries.   
\end{abstract}

\maketitle

\section{Introduction}

Recall that a partition $\lambda$ of an integer $n$ is a non-increasing sequence $(\lambda_1, \dots , \lambda_s)$ of positive integers whose sum is $n$.  Among the most celebrated results in the theory of partitions are the following, which are known as the Rogers-Ramanujan identities.
\begin{theorem}[The Rogers-Ramanujan identities]
Let $a=0$ or $1$. For positive $n$, the number of partitions of $n$ such that the difference between two consecutive parts is at least $2$ and the part $1$ appears at most $1-a$ times is equal to the number of partitions of $n$ into parts congruent to $\pm (1+a) \mod 5.$
\end{theorem}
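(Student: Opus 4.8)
The plan is to prove both halves by computing their generating functions in a formal variable $q$ and checking that these agree; this reduces the theorem to the two analytic Rogers--Ramanujan identities. Write $(a;q)_n=\prod_{j=0}^{n-1}(1-aq^j)$ and $(a;q)_\infty=\prod_{j\ge0}(1-aq^j)$. The product side is immediate: partitions into parts congruent to $\pm(1+a)\pmod 5$ are generated by $\prod_{m\equiv\pm(1+a)\,(5)}(1-q^m)^{-1}$, which is $1/\bigl((q;q^5)_\infty(q^4;q^5)_\infty\bigr)$ for $a=0$ and $1/\bigl((q^2;q^5)_\infty(q^3;q^5)_\infty\bigr)$ for $a=1$.

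For the difference side I would fix the number of parts and subtract a minimal staircase. Since a partition with gaps at least $2$ automatically has distinct parts, the condition that $1$ occurs at most $1-a$ times is equivalent to the smallest part being at least $1+a$. A partition into exactly $n$ such parts then arises uniquely by adding the staircase $(2n-1+a,\,2n-3+a,\dots,\,1+a)$, of total size $n^2+an$, to an arbitrary partition $\mu$ with at most $n$ parts. As the latter are generated by $1/(q;q)_n$, summing over $n$ shows the difference side is generated by
$$\sum_{n\ge0}\frac{q^{n^2+an}}{(q;q)_n}.$$
Hence the theorem is equivalent to the identities $\sum_{n\ge0}q^{n^2+an}/(q;q)_n=\prod_{m\equiv\pm(1+a)\,(5)}(1-q^m)^{-1}$ for $a=0$ and $a=1$.

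The analytic heart of the matter, and the step I expect to be the main obstacle, is establishing these last identities. I would prove them with Bailey's lemma. There is a Bailey pair relative to $1$ with $\beta_n=1/(q;q)_n$ and $\alpha_0=1$, $\alpha_n=(-1)^nq^{n(3n-1)/2}(1+q^n)$ for $n\ge1$, arising from the unit Bailey pair. Substituting it into the weak form of Bailey's lemma, $\sum_n q^{n^2}\beta_n=(q;q)_\infty^{-1}\sum_n q^{n^2}\alpha_n$, and collecting the resulting terms into a bilateral sum gives
$$\sum_{n\ge0}\frac{q^{n^2+an}}{(q;q)_n}=\frac{1}{(q;q)_\infty}\sum_{n=-\infty}^{\infty}(-1)^n q^{(5n^2+(2a+1)n)/2},$$
the case $a=1$ following in the same way from the companion Bailey pair. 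The bilateral sum is a theta series, which the Jacobi triple product evaluates as $(q^{2-a};q^5)_\infty(q^{3+a};q^5)_\infty(q^5;q^5)_\infty$. Dividing by $(q;q)_\infty=\prod_{r=1}^{5}(q^r;q^5)_\infty$ cancels three of the five factors and leaves exactly the product side in each case, completing the proof. (Alternatively, the two analytic identities can be extracted from the $q$-difference equation $f(t)=f(tq)+tq\,f(tq^2)$ satisfied by $f(t)=\sum_{n\ge0}t^nq^{n^2}/(q;q)_n$, evaluated at $t=1$ and $t=q$, and then matched against the same theta quotient.)
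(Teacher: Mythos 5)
This theorem is stated in the paper purely as classical background for the Rogers--Ramanujan--Capparelli circle of ideas; the paper gives no proof of it, so there is nothing internal to compare your argument against. That said, your proposal is a correct and complete proof along entirely standard lines. The staircase bijection is right: a gap-$\ge 2$ partition into exactly $n$ parts with smallest part $\ge 1+a$ corresponds bijectively to a partition with at most $n$ parts by subtracting $2(n-i)+1+a$ from the $i$th part, giving the sum side $\sum_{n\ge0}q^{n^2+an}/(q;q)_n$, and your reduction of the ``at most $1-a$ copies of $1$'' condition to ``smallest part $\ge 1+a$'' is valid because gaps of size $\ge2$ force distinctness. The Bailey pair you quote, $\beta_n=1/(q;q)_n$, $\alpha_n=(-1)^nq^{n(3n-1)/2}(1+q^n)$ relative to $1$, is correct (it checks out for small $n$ and is the one obtained by a single application of Bailey's lemma to the unit pair), and feeding it into the weak Bailey lemma plus the Jacobi triple product does yield the theta quotient $(q^{2};q^5)_\infty(q^{3};q^5)_\infty(q^5;q^5)_\infty/(q;q)_\infty = 1/((q;q^5)_\infty(q^4;q^5)_\infty)$ as you say. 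The only place you are terse is the $a=1$ case, where you invoke ``the companion Bailey pair'' relative to $q$ without writing it down; that is a genuine (if routine) ingredient that a full write-up should state explicitly. It is worth noting that your toolkit --- staircases added to or removed from partitions, $q$-binomial-type summations, and the Jacobi triple product \eqref{JTP} --- is exactly the toolkit the paper itself uses in Sections 2--4 for the Capparelli-type results, so your proof is very much in the spirit of the paper even though the paper does not prove this particular statement.
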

In~\cite{Lepowsky}, Lepowsky and Wilson gave a connection between these identities and certain representations of the affine Lie algebra $sl_2(\C)^{\sim}.$ Inspired by their work, Capparelli~\cite{Capparelli1}, Meurman-Primc~\cite{Meurman}, Primc~\cite{Primc}, Siladi\'c~\cite{Siladic} and others considered other representations and obtained many interesting partition identities yet unknown to the combinatorics community. The most widely studied of these has probably been Capparelli's identity, which he conjectured in \cite{Capparelli1}.

\begin{theorem}[The Capparelli identity]
\label{th:capa}
Let $\mathcal{C}(n)$ denote the number of partitions of $n$ into parts $>1$ such that parts differ by at least $2$, and at least $4$ unless consecutive parts add up to a multiple of $3$.  Let $\mathcal{D}(n)$ denote the number of partitions of $n$ into distinct parts not congruent to $\pm 1 \pmod{6}$. Then for every positive integer $n$, $\mathcal{C}(n) = \mathcal{D}(n)$.   
\end{theorem}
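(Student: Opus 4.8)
The plan is to prove the identity at the level of generating functions, introducing enough extra bookkeeping to tame the residue-dependent gap condition. Write $C(q) = \sum_{n \geq 0} \mathcal{C}(n) q^n$ and $D(q) = \sum_{n \geq 0} \mathcal{D}(n) q^n$. The side counted by $\mathcal{D}$ is immediate: the admissible residues modulo $6$ are $0, 2, 3, 4$, and the allowed parts are exactly the multiples of $3$ together with the even parts that are not multiples of $6$, so that
$$D(q) = (-q^3; q^3)_\infty \, \frac{(-q^2; q^2)_\infty}{(-q^6; q^6)_\infty}.$$
The entire difficulty is therefore concentrated in producing a tractable formula for $C(q)$ and matching it to $D(q)$.

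To handle $C(q)$, I would first classify each part by its residue modulo $3$ and record what the gap condition says for each residue pair of two consecutive parts: two parts sum to a multiple of $3$ exactly when their residues are $(0,0)$, $(1,2)$ or $(2,1)$, and in those cases the required gap drops from $4$ to $2$. This case analysis is the combinatorial heart of the problem, and it is the step I expect to be the main obstacle, because the gap one must impose between $\lambda_i$ and $\lambda_{i+1}$ depends on the residues of \emph{both} parts, so the parts do not decouple and the naive generating function does not factor.

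The cleanest way I know to linearize this dependence is the method of weighted words, which the present paper takes as its starting point. The idea is to replace the integers by colored integers carrying the residue data, to fix an ordering of the colored letters for which Capparelli's residue-dependent gaps become ordinary minimal-difference (distinctness-type) conditions, and to prove the refined identity in the colored variables, where both sides become simple infinite products. One then recovers Theorem~\ref{th:capa} by the dilation $q \mapsto q^3$ together with an appropriate specialization of the color variables, which reinstates the modulus $6$ on the product side and the gap conditions on the other. Checking that the colored ordering exactly reproduces the ``$\geq 4$ unless the sum is divisible by $3$'' rule, and that the specialization is consistent, are the two points that require care.

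An alternative I would keep in reserve is a purely analytic one: derive a $q$-difference equation (or a three-term recurrence in the largest part) satisfied by a refinement of $C(q)$, solve it as a $q$-series, and identify the solution with $D(q)$ via the Jacobi triple product. This avoids the explicit coloring but trades it for the bookkeeping of the functional equation; either way, the residue-dependent gap is the crux of the argument.
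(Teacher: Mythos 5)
Your reduction of the $\mathcal{D}$-side to the product $(-q^3;q^3)_\infty(-q^2;q^2)_\infty/(-q^6;q^6)_\infty$ is correct, and your diagnosis of where the difficulty lies is accurate: the residue-dependent gap condition is the crux, and the method of weighted words (coloring the parts by residue and proving a refined identity before dilating $q\mapsto q^3$) is precisely the strategy this paper adopts, following Alladi--Andrews--Gordon. However, as written your argument has a genuine gap: the entire content of the theorem is concentrated in the step you describe as ``prove the refined identity in the colored variables, where both sides become simple infinite products,'' and that step is asserted rather than carried out. Only the $\mathcal{D}$-side is a product by inspection. The colored $\mathcal{C}$-side is a generating function for partitions subject to a $3\times 3$ matrix of minimal-difference conditions (the matrix $C$ in \eqref{Cappdiffmatrix}), and showing that it equals $(-q)_\infty(-aq^2;q^2)_\infty(-bq^2;q^2)_\infty$ is Theorem \ref{th:capa-aag}, which is the hard part of every known proof. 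Naming the method does not discharge this obligation; some concrete mechanism --- $q$-difference equations, $q$-trinomial coefficients, or (as in Section 2 of this paper) a constant-term identity --- must actually be executed.

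For comparison, the paper's route is to write the colored generating function as the constant term in $z$ of $(-q/z)_\infty(-z)_\infty(q)_\infty\,(-azq)_\infty(-bzq)_\infty/\bigl((z)_\infty(abz^2q;q^2)_\infty\bigr)$, evaluate that constant term by Jacobi's triple product, the $q$-Chu--Vandermonde sum \eqref{qChu}, and the $q$-binomial theorem to obtain the product $(-q)_\infty(-aq^2;q^2)_\infty(-bq^2;q^2)_\infty$, and separately interpret the constant term combinatorially by organizing four auxiliary partitions into ``levels'' as in \eqref{levelorder} and adding a staircase $q^{\binom{r+s+t+2v+1}{2}}$ to produce exactly the partitions satisfying the matrix difference conditions. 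Your reserve plan (a $q$-difference equation for a refinement of $C(q)$, solved and matched via the triple product) is a legitimate alternative --- it is essentially the Alladi--Andrews--Gordon approach --- but it too is only sketched. Until one of these computations is supplied, together with the verification that the chosen ordering of colored letters dilates to the ``$\geq 4$ unless the sum is divisible by $3$'' condition, the proof is incomplete.
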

Several proofs of Theorem \ref{th:capa} have been given. It was first proved combinatorially by Andrews in~\cite{Andrews1}, then by Capparelli in~\cite{Capparelli2} and by Tamba-Xie in~\cite{Xie} using Lie algebraic techniques, and then it was refined and generalized by Alladi, Andrews and Gordon in ~\cite{Al-An-Go1} using the method of ``weighted words.''  For other combinatorial studies related to the Capparelli identity, see \cite{Be-Un1,Br-Ma1,Fu-Ze1,Si1}.

Our starting point here is the work of Alladi, Andrews, and Gordon \cite{Al-An-Go1}.    They considered partitions into natural numbers in three colors, $a$, $b$, and $u$ (they wrote $c$ instead of $u$), with no part $1_b$ or $1_a$, 
satisfying the difference conditions in the matrix
\begin{equation} \label{Cappdiffmatrix}
C=\bordermatrix{\text{} & a & b & u \cr a & 2 & 0 & 2 \cr b & 2 & 2 & 3 \cr u & 1 & 0 & 1}.
\end{equation}
Here the entry $(x,y)$ in the matrix $C$ gives the minimal difference between successive parts of colors $x$ and $y$.   
Letting $C(n;i,j)$ denote the number of such partitions of $n$ with $i$ parts colored $a$ and $j$ parts colored $b$, they found the following infinite product generating function.
\begin{theorem}[Alladi-Andrews-Gordon \cite{Al-An-Go1}]
\label{th:capa-aag}
We have
\begin{equation} \label{eq:capa-aag}
\sum_{n,i,j \geq 0} C(n;i,j) a^ib^jq^n =  (-q)_{\infty}(-aq^2;q^2)_{\infty}(-bq^2;q^2)_{\infty}.
\end{equation}
\end{theorem}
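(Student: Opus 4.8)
**

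The goal is to prove that the generating function for three-colored partitions with difference conditions given by matrix $C$ equals a nice infinite product. Let me think about the standard approach for such "weighted words" identities.The plan is to establish the product formula by setting up a two-variable functional equation for the generating function and solving it. Let me denote by $G(a,b;q)$ the left-hand side of \eqref{eq:capa-aag}, and the strategy is to track the largest part of a partition satisfying the difference conditions in $C$. The key structural observation is that the difference conditions encoded in the matrix are \emph{ordered}: one must first fix a total ordering on the colored integers consistent with the minimal differences, and then read off how appending a new smallest (or largest) part constrains the remainder of the partition. Because the entries of $C$ depend on the colors of both the part being added and its neighbor, peeling off the extreme part naturally produces a system relating $G(a,b;q)$ to shifted versions $G(aq^?, bq^?; q)$.

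The main steps I would carry out are as follows. First, I would carefully specify the linear order on the symbols $\{n_a, n_b, n_u : n \geq 1\}$ dictated by the matrix $C$, so that "successive parts" is unambiguous and the difference conditions can be applied iteratively. Second, I would set up recurrences by conditioning on the smallest part (or the color of the smallest part) of a valid partition; removing that part and subtracting its contribution shifts the remaining parts and multiplies the generating function by an appropriate power of $q$ (and by $a$ or $b$ when the removed part carries that color). Third, I would assemble these cases into a functional equation of the form $G(a,b;q) = (\text{explicit factor}) \cdot G(aq^{2}, bq^{2}; q)$ or a small coupled system, designed so that iterating it telescopes. Fourth, I would iterate the functional equation to produce the infinite product $(-q)_\infty (-aq^2;q^2)_\infty (-bq^2;q^2)_\infty$, checking that the initial conditions (no parts, so $G=1$ at $a=b=0$ in the appropriate limit) match, and that the $q$-exponents in each factor come out correctly: the $(-q)_\infty$ accounts for the uncolored parts $u$ with their difference $1$, while the even-power factors $(-aq^2;q^2)_\infty$ and $(-bq^2;q^2)_\infty$ reflect the minimal difference $2$ among like-colored $a$'s and $b$'s together with the forbidden part $1$.

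The hard part will be the bookkeeping in the functional equation: the asymmetry of the matrix (for instance the $(b,u)$ entry is $3$ while the $(u,b)$ entry is $0$, and the $(a,u)$ entry is $2$ while $(u,a)$ is $1$) means the recurrence does not decouple cleanly by color, and one must be careful that appending a part of a given color correctly accounts for \emph{both} the gap to the next part and the possibility that the forbidden parts $1_a, 1_b$ are never created. I would expect to verify the functional equation by a direct combinatorial argument on the set of admissible partitions rather than purely formally, since the color-dependent differences make it easy to introduce off-by-one errors in the exponents. Once the functional equation is shown to hold and to have a unique solution among formal power series with $G(0,0;q)=1$, matching it against the product — which manifestly satisfies the same functional equation — completes the proof.
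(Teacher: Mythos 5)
Your proposal is a plan rather than a proof, and the decisive step is missing: you never actually derive the functional equation, you only assert that conditioning on the extreme part will ``assemble'' into something of the form $G(a,b;q) = (\text{explicit factor})\cdot G(aq^2,bq^2;q)$ or a small coupled system that telescopes. For this particular identity that assertion is exactly where all the difficulty lives. Matching against the product forces the putative one-step equation to be $G(a,b;q)=(1+aq^2)(1+bq^2)\,G(aq^2,bq^2;q)$, and it is not at all clear (and I do not believe) that the combinatorial generating function for the partitions defined by the matrix $C$ satisfies such a clean single recurrence: the asymmetric mixed entries you yourself flag (the $(a,b)$ entry $0$, the $(b,u)$ entry $3$, etc.) mean that removing the smallest part changes the admissibility condition on the new smallest part in a color-dependent way, so one is forced into a genuine coupled system $G_x(a,b;q)$ indexed by the color of the smallest allowed part. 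Solving that system is the content of the Alladi--Andrews--Gordon proof and requires a nontrivial $q$-series ``key identity'' (or manipulations of $q$-binomial coefficients); it does not reduce to a telescoping product plus a uniqueness-of-solutions remark. As written, your argument would also need to justify uniqueness of the formal power series solution of whatever system you obtain, which again depends on actually having the system in hand.

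For comparison, the paper proves the theorem by an entirely different route: it shows that the right-hand side equals the constant term in $z$ of $(-q/z)_\infty(-z)_\infty(q)_\infty\,(-azq)_\infty(-bzq)_\infty/\bigl((z)_\infty(abz^2q;q^2)_\infty\bigr)$ using Jacobi's triple product, the $q$-Chu--Vandermonde sum, and the $q$-binomial theorem, and then interprets that constant term combinatorially by assembling four auxiliary partitions into ``levels'' and adding a staircase, which produces exactly the partitions counted by $C(n;i,j)$. If you want to complete a proof along your proposed lines you should either write down and solve the full coupled $q$-difference system explicitly (essentially reproducing Alladi--Andrews--Gordon), or switch to a construction like the paper's in which the difference conditions are built directly rather than verified through a recurrence.
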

Here we have used the usual $q$-hypergeometric notation for  $n \in \N \cup \{+ \infty \}$,
$$
(a)_n = (a;q)_n = \prod_{k=0}^{n-1} (1-aq^k).
$$
 Under the dilation $q \rightarrow q^3$ and the translations $a \rightarrow aq^{-2}$ and $b \rightarrow bq^{-4}$, the difference conditions in \eqref{Cappdiffmatrix} become
\begin{equation} 
\bordermatrix{\text{} & a & b & u \cr a & 6 & 2 & 4 \cr b & 4 & 6 & 5 \cr u & 5 & 4 & 3},
\end{equation}
which are equivalent to those defining the partitions counted by $\mathcal{C}(n)$ in Theorem \ref{th:capa}.   The product in \eqref{eq:capa-aag} becomes the generating function for the partitions counted by $\mathcal{D}(n)$, and with the two extra  parameters $a$ and $b$, one obtains a refinement of Capparelli's identity.

\begin{corollary}[Alladi-Andrews-Gordon \cite{Al-An-Go1}] \label{cor:capa-aag}
Let $\mathcal{C}(n;i,j)$ denote the number of partitions counted by $\mathcal{C}(n)$ and $\mathcal{D}(n)$ in Theorem \ref{th:capa}, with $i$ parts congruent to $1$ modulo $3$ and $j$ parts congruent to $2$ modulo $3$.   Then $\mathcal{C}(n;i,j) = \mathcal{D}(n;i,j)$.
\end{corollary}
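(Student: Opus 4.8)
The plan is to specialize the generating function of Theorem~\ref{th:capa-aag} by means of the dilation $q \to q^3$ and the translations $a \to aq^{-2}$, $b \to bq^{-4}$ introduced above, and then to interpret each side combinatorially. The first thing I would record is the effect on a single colored part: a part of size $k$ and color $a$, $b$, or $u$ is sent to a part of size $3k-2$, $3k-4$, or $3k$ respectively, while the exponents of $a$ and $b$ are untouched. Hence the three colors are carried bijectively onto the residue classes $1$, $2$, $0$ modulo $3$, and the exponent of $a$ (resp.\ $b$) continues to count the parts in the class $1 \pmod 3$ (resp.\ $2 \pmod 3$).

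On the left-hand side the crucial point is that, after dilation, the color of a part is determined entirely by its residue modulo $3$, so the colors can simply be dropped: the transformed objects are ordinary partitions into parts exceeding $1$ — the absence of $1_a$ and $1_b$ makes the smallest possible parts $2$, $3$, and $4$ — weighted by $a^i b^j$, where $i$ and $j$ count the parts congruent to $1$ and $2$ modulo $3$. What then needs to be verified is that the difference conditions for $C(n;i,j)$ become exactly those defining $\mathcal{C}(n)$. I would do this by a direct comparison, for each of the nine ordered residue pairs $(r_1,r_2)$ of a larger part and the next smaller part, of the minimal gap coming from the dilated matrix with the minimal gap allowed by Capparelli's rule: a gap of at least $2$, raised to at least $4$ unless $r_1+r_2 \equiv 0 \pmod 3$, and subject to the constraint $d \equiv r_1-r_2 \pmod 3$ imposed by the residues. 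For example, $(1,0)$ rules out $d=2,3$ and so forces $d \ge 4$; $(1,2)$ permits $d=2$; and $(0,0)$ gives $d \ge 3$. Checking all nine pairs shows the two sets of conditions agree, and in passing that every transformed part is distinct.

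For the right-hand side I would expand the product after substitution. The factor $(-q)_{\infty}$ becomes $(-q^3;q^3)_{\infty}$, generating the distinct multiples of $3$; the factor $(-aq^2;q^2)_{\infty}$ becomes $(-aq^4;q^6)_{\infty}$, generating distinct parts congruent to $4 \pmod 6$ — that is, $1 \pmod 3$ — marked by $a$; and $(-bq^2;q^2)_{\infty}$ becomes $(-bq^2;q^6)_{\infty}$, generating distinct parts congruent to $2 \pmod 6$, i.e.\ $2 \pmod 3$, marked by $b$. Taken together these give precisely the distinct parts congruent to $0,2,3,4 \pmod 6$, namely the parts not congruent to $\pm 1 \pmod 6$; so the right-hand side equals $\sum \mathcal{D}(n;i,j)\,a^i b^j q^n$, with $i$ and $j$ again recording the classes $1$ and $2$ modulo $3$.

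Putting the two identifications together, the specialized form of \eqref{eq:capa-aag} reads
\[
\sum_{n,i,j \ge 0} \mathcal{C}(n;i,j)\, a^i b^j q^n = \sum_{n,i,j \ge 0} \mathcal{D}(n;i,j)\, a^i b^j q^n,
\]
and comparing coefficients of $a^i b^j q^n$ gives the statement. I expect the only genuine obstacle to be the difference-condition bookkeeping of the second paragraph; the rest is a routine rewriting of the product formula already furnished by Theorem~\ref{th:capa-aag}.
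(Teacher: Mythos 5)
Your proposal is correct and follows exactly the route the paper itself takes: apply the dilation $q \to q^3$, $a \to aq^{-2}$, $b \to bq^{-4}$ to Theorem \ref{th:capa-aag}, verify that the dilated difference matrix reproduces Capparelli's gap conditions (the nine-residue-pair check you describe, which the paper records as the dilated matrix), and observe that the product becomes the generating function for $\mathcal{D}(n;i,j)$. Nothing further is needed.
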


In this paper we generalize the work of Alladi-Andrews-Gordon in three different ways.   The combinatorial setting is that of \emph{jagged overpartitions}.\begin{footnote}{The term ``jagged'' was coined in \cite{Fo-Ja-Ma1} to refer to partitions where successive parts may have negative differences.}\end{footnote}    We consider sequences of non-negative integers $(\lambda_1,\dots,\lambda_s)$ whose sum is $n$, where parts occur in the three colors $a$, $b$, and $u$, the final occurrence of a positive part may be overlined, with $\lambda_s \not \in \{0_a,0_b,\overline{1}_a,\overline{1}_b\}$, and such that the miminal difference between successive parts $\lambda_i$ and $\lambda_{i+1}$ is given by the matrix
 \begin{equation} \label{Cappdiffmatrix-1}
\overline{C} = \bordermatrix{\text{} & \o{a} & \o{b} & \o{u} & a & b & u \cr \o{a} & 2 & 0 & 2 & 2 & 0 & 2 \cr \o{b} & 2 & 2 & 3 & 2 & 2 & 3 \cr \o{u} & 1 & 0 & 1 & 1 & 0 &1 \cr a & 1 & -1 & 1 & 1 & -1 & 1 \cr b & 1 & 1 & 2 & 1 & 1 & 2 \cr u & 0 & -1 & 0 & 0 & -1 &0}.
\end{equation}
For example, the jagged overpartition $(4_a, \overline{5}_b, 2_u,2_u,\overline{2}_u,1_u,2_b,1_a)$ satisfies the conditions above.   Let $\overline{C}(n;k;i,j)$ denote the number of such jagged overpartitions of $n$ with $k$ non-overlined parts, $i$ parts colored $a$ and $j$ parts colored $b$.    Note that when all parts are overlined (corresponding to $k=0$), the jagged overpartitions may be identified with the partitions counted by $C(n;i,j)$. Our three generalizations of Theorem \ref{th:capa-aag}  are the following.

\begin{theorem}
\label{th:capa-chelou}
Let  $\overline{C}_1(n;k;i,j)$ denote the number of jagged overpartitions counted by $\overline{C}(n;k,i,j)$ with the added condition that if the smallest $u$-colored part $x_u$ is equal to the number of overlined parts which come after $x_u$ (with $x_u$ included), then the final part must be overlined. Then we have
\begin{equation} 
\label{eq:capa-chelou}
\sum_{n,k,i,j \geq 0} \overline{C}_1(n;k;i,j) a^ib^jd^kq^n = \frac{(-q)_{\infty}}{(dq)_{\infty}} \sum_{r,s \geq 0} \frac{q^{r^2+r+s^2+s - \binom{r+s+1}{2}} a^rb^s d^{r+s} (-q/d)_{r+s}}{(q^2;q^2)_r(q^2;q^2)_s}.
\end{equation} 
\end{theorem}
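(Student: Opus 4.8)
The plan is to pin down the right-hand side of \eqref{eq:capa-chelou} by a $q$-difference equation obtained from peeling off the largest part of a jagged overpartition, and then to solve that equation by matching coefficients. First I would read the total order on colored integers directly off the matrix \eqref{Cappdiffmatrix-1}: going row by row, the minimal differences tell me, for each value $m$, how the six types $\o{a}_m,\o{b}_m,\o{u}_m,a_m,b_m,u_m$ interleave with those of value $m\pm 1$, so that the jagged/overlined difference conditions become an honest linear order on a larger alphabet. I would then introduce an auxiliary variable $x$ bounding the largest part and let $F(x)=F(x;a,b,d,q)$ be the generating function for the jagged overpartitions counted by $\overline{C}_1$, grouped as $F(x)=\sum_{r,s\ge 0} a^r b^s\,\Phi_{r,s}(x;d,q)$ according to the numbers $r$ and $s$ of $a$- and $b$-parts, and further split according to the color and overline status of the largest part. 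Removing that largest part, and rescaling $x$ by the appropriate power of $q$ dictated by the relevant entries of $\overline{C}$, expresses each piece in terms of the others and produces a single functional equation for $F(x)$. The two unusual features of the problem enter exactly here: the rule that only a final occurrence of a value may be overlined governs which over/non-overlined transitions are legal, and the side condition defining $\overline{C}_1$ — relating the smallest $u$-part to the number of trailing overlined parts — is precisely the bookkeeping device that makes the recursion close rather than leak uncontrolled boundary terms.

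Next I would solve the functional equation by substituting the $(r,s)$-expansion and extracting a two-index recurrence for the $\Phi_{r,s}$. I expect this recurrence to decouple into an $r$-part and an $s$-part of the standard type whose solution carries the factors $1/(q^2;q^2)_r$ and $1/(q^2;q^2)_s$, with the cross term contributing the quadratic exponent $r^2+r+s^2+s-\binom{r+s+1}{2}=\tfrac12\big((r-s)^2+r+s\big)$, which is the weight of the minimal jagged overpartition having $r$ parts colored $a$ and $s$ colored $b$. The factor $d^{r+s}(-q/d)_{r+s}=\prod_{j=1}^{r+s}(d+q^j)$ should then record, slot by slot, whether the corresponding $a$- or $b$-part is overlined (weight $q^j$) or left non-overlined (weight $d$). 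The non-overlined and overlined $u$-parts, which are governed by $\overline{C}(u,u)=0$ and $\overline{C}(\o{u},\o{u})=1$ and so form an overpartition in which $d$ marks the non-overlined parts, are coupled to the $a$- and $b$-parts through the off-diagonal entries of $\overline{C}$, but I expect the solution nevertheless to factor, contributing the separable prefactor $\frac{(-q)_\infty}{(dq)_\infty}$ in the limit in which the bound $x$ on the largest part is removed.

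Finally I would close the argument by uniqueness: the claimed right-hand side, once refined with the same auxiliary variable $x$, satisfies the same functional equation and the same initial conditions, hence equals $F(x)$, and specializing recovers \eqref{eq:capa-chelou}. A useful consistency check, which also guides the computation, is the specialization $d\to 0$: then $\prod_{j=1}^{r+s}(d+q^j)$ collapses to $q^{\binom{r+s+1}{2}}$ and $\frac{(-q)_\infty}{(dq)_\infty}$ to $(-q)_\infty$, so the double sum factors by Euler's identity $\sum_{r\ge 0} q^{r^2+r}a^r/(q^2;q^2)_r=(-aq^2;q^2)_\infty$ and reproduces $(-q)_\infty(-aq^2;q^2)_\infty(-bq^2;q^2)_\infty$, i.e.\ Theorem~\ref{th:capa-aag} for the all-overlined case $k=0$. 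The main obstacle is the derivation of the functional equation itself: because successive differences may be negative and because overlines are confined to final occurrences, peeling off the largest part is genuinely delicate, and it is only the $\overline{C}_1$ side condition that prevents spurious boundary contributions; getting this interplay exactly right is the crux of the proof.
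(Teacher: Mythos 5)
Your proposal is an outline of the classical Alladi--Andrews--Gordon $q$-difference-equation strategy, but it stops short of the actual proof at exactly the point you yourself identify as the crux. Every load-bearing step is phrased as an expectation: ``I expect this recurrence to decouple,'' ``I expect the solution nevertheless to factor,'' ``the side condition \dots\ is precisely the bookkeeping device that makes the recursion close.'' None of these is derived. The most serious gap is the treatment of the $\overline{C}_1$ side condition. That condition is global: it compares the \emph{smallest} $u$-colored part $x_u$ with the number of overlined parts occurring after $x_u$. A functional equation obtained by peeling off the \emph{largest} part, with a single catalytic variable $x$ bounding that largest part, has no mechanism for tracking either of these statistics; you would need extra catalytic variables (at least one for the smallest $u$-part and one for the trailing overline count), and you give no argument that the resulting system closes or that its solution is the claimed double sum. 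There is no reason offered to believe the condition ``prevents spurious boundary contributions'' --- in the paper this condition is not an input to a recursion at all, but an \emph{output}: it is the combinatorial signature, after a generalized staircase $d^{N}(-q/d)_N$ has been added, of whether a part $0_u$ was inserted beforehand (this is where the factor $1+q^{r+s+2v+1}$ comes from). Your route would have to rediscover that characterization inside the functional equation, and the proposal gives no indication of how.

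For comparison, the paper's proof avoids $q$-difference equations entirely. It starts from the explicit four-fold sum \eqref{contributions} of Section 2, whose summand is interpreted as a jagged partition built from levels as in \eqref{levelorder}, replaces the staircase $q^{\binom{r+s+t+2v+1}{2}}$ by $q^t d^{r+s+t+2v}(-q/d)_{r+s+t+2v}(1+q^{r+s+2v+1})$, reads off that the resulting objects are exactly those counted by $\overline{C}_1(n;k;i,j)$, and then evaluates the sum by \eqref{qnminusk}, \eqref{qChu}, and \eqref{qbin2}. Your $d\to 0$ consistency check is correct and the identification of $q^{((r-s)^2+r+s)/2}$ as the minimal weight matches the discussion in Section 5, but as it stands the proposal is a plan whose hardest component --- the one that actually distinguishes $\overline{C}_1$ from $\overline{C}$ --- remains unexecuted.
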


\begin{theorem}
\label{th:capa-asym1}
Let $\overline{C}_2(n;k;i,j)$ denote the number of jagged overpartitions of $n$ counted by $\overline{C}(n;k;i,j)$ with the added condition that  the final $\ell$ parts must be overlined, where $\ell$ is the number of parts of color $b$ or $u$. Then
\begin{equation} \label{eq:capa-asym1}
\sum_{n,k,i,j \geq 0} \overline{C}_2(n;k;i,j) a^ib^jd^kq^n = (-q)_{\infty}(-bq^2;q^2)_{\infty} \sum_{r \geq 0} \frac{q^{\binom{r+1}{2}}(ad)^r(-q/d)_r}{(q^2;q^2)_r}.
\end{equation}
\end{theorem}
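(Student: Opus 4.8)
The plan is to recognize the right-hand side of \eqref{eq:capa-asym1} as the Alladi--Andrews--Gordon product of Theorem~\ref{th:capa-aag} in which only the factor recording the $a$-colored parts has been deformed: writing $A(a,d,q):=\sum_{r\geq 0}q^{\binom{r+1}{2}}(ad)^r(-q/d)_r/(q^2;q^2)_r$, the target is $(-q)_\infty(-bq^2;q^2)_\infty\,A(a,d,q)$, with the $b$- and $u$-contribution $(-q)_\infty(-bq^2;q^2)_\infty$ untouched. That this is the correct quantity is confirmed by $d\to 0$: since $(ad)^r(-q/d)_r=a^r\prod_{k=1}^r(d+q^k)\to a^rq^{\binom{r+1}{2}}$, Euler's identity gives $A(a,0,q)=(-aq^2;q^2)_\infty$, so at $d=0$ the statement collapses to Theorem~\ref{th:capa-aag} restricted to the all-overlined case $k=0$. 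At the other extreme $r=0$, the $\overline{C}_2$-condition forces every part to be overlined (then $\ell$ is the total number of parts), and the contribution is exactly $(-q)_\infty(-bq^2;q^2)_\infty$ by the $a=0$ specialization of Theorem~\ref{th:capa-aag}; correspondingly the $r=0$ term of $A$ is $1$. My reading of the $\overline{C}_2$-condition is that it is precisely the hypothesis decoupling the $a$-parts from the rest, so that relaxing the overlining of the $a$-parts alone (tracked by $d$) turns $(-aq^2;q^2)_\infty$ into $A(a,d,q)$.

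Concretely I would argue by a recurrence on the number $r$ of $a$-colored parts, realized by deleting the largest $a$-part and restandardizing the smaller parts through their columns of $\overline{C}$ in \eqref{Cappdiffmatrix-1}. Writing $A_r$ for the $r$-th summand of $A(a,d,q)$, one computes directly that $A_r=\frac{aq^r(d+q^r)}{1-q^{2r}}A_{r-1}$, and the goal is to realize exactly this recurrence combinatorially: the factor $aq^r$ should record the inserted $a$-part, the factor $d+q^r$ its two possibilities (non-overlined, weight $d$, or overlined, weight $q^r$), and $1/(1-q^{2r})$ the freedom to add $2$ to the top $i$ parts of the $a$-staircase for each $i$, i.e. its even-lattice translation. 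The structural fact driving the base case is that the overlined--overlined block of $\overline{C}$ is exactly the Capparelli matrix $C$ of \eqref{Cappdiffmatrix}, so that the $r=0$ case is the $a=0$ product of Theorem~\ref{th:capa-aag}. Multiplying through by this fixed $b,u$-factor and summing the recurrence gives \eqref{eq:capa-asym1}; equivalently, one can try to extend the weighted-words bijection of Alladi and Gordon, by which Theorem~\ref{th:capa-aag} was proved, directly to jagged overpartitions.

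The main obstacle will be the bookkeeping of the difference conditions during this deletion and reinsertion, especially at the genuinely jagged steps. In $\overline{C}$ the four entries equal to $-1$ allow a non-overlined $a$- or $u$-part to be immediately followed by a strictly larger $b$-part, and the several zero entries allow equalities; these are exactly what make the objects jagged rather than ordinary partitions, and the deletion map must remain a bijection in their presence. The delicate point is to verify that the $\overline{C}_2$ tail condition is precisely the compatibility constraint rendering the extraction of the $a$-parts reversible, so that what is left after removing them is a \emph{bona fide} all-overlined $b,u$-partition counted by Theorem~\ref{th:capa-aag} with $a=0$, and that the $a$-parts reassemble on the correct even sublattice carried by the factor $(-aq^2;q^2)_\infty$ rather than on the full one --- this is the subtlety that a naive ``separate by colour'' count misses. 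Once the recurrence, equivalently the bijection, is established, identifying the resulting series with $A(a,d,q)$ is the routine $q$-binomial and Euler computation already seen in the $d=0$ check.
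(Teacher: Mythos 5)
Your consistency checks (the $d\to 0$ and $r=0$ specializations, and the ratio $A_r/A_{r-1}=aq^r(d+q^r)/(1-q^{2r})$) are correct, but the core of the argument --- the bijection or recurrence that is supposed to realize this ratio combinatorially --- is only announced, and it is exactly where the difficulty sits. For comparison, the paper does not insert $a$-parts at all: it returns to the quadruple sum \eqref{contributions}, whose summand already encodes a jagged partition assembled from $\lambda_a,\lambda_b,\lambda_{ab},\lambda_u$ together with a staircase $q^{\binom{r+s+t+2v+1}{2}}$, and replaces that staircase by the partial staircase $q^{\binom{r+s+t+2v+1}{2}-\binom{r+v+1}{2}}d^{r+v}(-q/d)_{r+v}$ (a true staircase on the $s+t+v$ largest parts, a generalized staircase on the remaining $r+v$); the tail condition defining $\overline{C}_2$ then holds by construction, and the right-hand side of \eqref{eq:capa-asym1} follows from \eqref{qnminusk}, \eqref{qChu} and \eqref{qbin1}.

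There are two concrete obstructions to your plan as stated. First, the $\overline{C}_2$ condition is positional, not chromatic: it forces the \emph{final} $\ell$ parts to be overlined, where $\ell$ is the number of $b$- and $u$-colored parts, but those final $\ell$ parts need not themselves be the $b$- and $u$-parts, since $a$-parts are interleaved throughout the level structure \eqref{levelorder}. Consequently, removing the $a$-parts does not leave an all-overlined $b,u$-object counted by the $a=0$ case of Theorem \ref{th:capa-aag}; the ``decoupling'' you invoke is not visible at the combinatorial level and only emerges after the $q$-Chu--Vandermonde summation, where the factor $(-q)_{r+s}$ coupling the $a$- and $b$-counts is absorbed into $(-q)_{\infty}$. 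Second, the $a$- and $b$-parts are genuinely entangled through the pairs $(k-1)_a k_b$ coming from $\lambda_{ab}$ (the $v$-sum) and through the $0$ and $-1$ entries of $\overline{C}$ in \eqref{Cappdiffmatrix-1}; a deletion of ``the largest $a$-part'' must specify what happens to a $b$-part it may be paired with, and no such rule is given. Until the insertion map is defined and proved to be a weight-preserving bijection compatible with $\overline{C}$ and the tail condition, the proposal is a plausible program rather than a proof.
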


\begin{theorem}
\label{th:capa-asym2}
Let $\overline{C}_3(n;k;i,j)$ denote the number of jagged overpartitions of $n$ counted by $\overline{C}(n;k;i,j)$ with the added condition that the final $t$ parts must be overlined, where $t$ is the number of parts of color $u$.  Then
\begin{equation} \label{eq:capa-asym2}
\sum_{n,k,i,j \geq 0} \overline{C}_3(n;k;i,j) a^ib^jd^kq^n = (-q)_{\infty} \sum_{r,s \geq 0} \frac{q^{r^2+r+s^2+s - \binom{r+s+1}{2}} a^rb^s d^{r+s} (-q/d)_{r+s}}{(q^2;q^2)_r(q^2;q^2)_s}.
\end{equation}
\end{theorem}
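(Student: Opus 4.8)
The plan is to deduce Theorem~\ref{th:capa-asym2} from Theorem~\ref{th:capa-chelou}. Writing $\Sigma$ for the common double sum $\sum_{r,s\ge 0}\frac{q^{r^2+r+s^2+s-\binom{r+s+1}{2}}a^rb^sd^{r+s}(-q/d)_{r+s}}{(q^2;q^2)_r(q^2;q^2)_s}$, the right-hand side of \eqref{eq:capa-chelou} is $\frac{(-q)_\infty}{(dq)_\infty}\,\Sigma$ while that of \eqref{eq:capa-asym2} is $(-q)_\infty\,\Sigma$, so the two differ by exactly the factor $(dq)_\infty$. Granting Theorem~\ref{th:capa-chelou}, proving Theorem~\ref{th:capa-asym2} is therefore equivalent to establishing the purely combinatorial identity
\[
\sum_{n,k,i,j\ge 0}\overline{C}_1(n;k;i,j)\,a^ib^jd^kq^n=\frac{1}{(dq)_\infty}\sum_{n,k,i,j\ge 0}\overline{C}_3(n;k;i,j)\,a^ib^jd^kq^n .
\]
Since $\frac{1}{(dq)_\infty}$ is the generating function for ordinary partitions into positive parts in which each part carries a factor $d$ and a factor $q^{(\text{size})}$, it suffices to exhibit a weight-preserving bijection between the jagged overpartitions counted by $\overline{C}_1$ and pairs $(\Lambda,\mu)$ with $\Lambda$ counted by $\overline{C}_3$ and $\mu$ an ordinary partition, under which the number of parts of $\mu$ is added to the count $k$ of non-overlined parts and $|\mu|$ is added to $n$, while $i$ and $j$ stay fixed.

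I would build this bijection from the bottom of the jagged overpartition. The objects counted by $\overline{C}_1$ and by $\overline{C}_3$ are the same family subject to the matrix $\overline C$ of \eqref{Cappdiffmatrix-1}; they differ only in the tail condition, the one for $\overline{C}_1$ being the weaker. The relevant structural feature is that the $(u,u)$-entry of the non-overlined block of $\overline C$ is $0$, so that equal consecutive \emph{non-overlined} $u$-colored parts are allowed; this is the only mechanism producing freely repeatable parts at the bottom of a jagged overpartition. The map I would use strips from a $\overline{C}_1$-object its maximal bottom run of non-overlined positive $u$-parts — which is automatically non-increasing and hence a genuine partition $\mu$ contributing $d^{(\#\mu)}q^{|\mu|}$ and neither $a$ nor $b$ — and returns the surviving jagged overpartition $\Lambda$; the inverse reinserts the parts of $\mu$ as non-overlined $u$-parts at the very bottom, which is always legal since such parts may repeat and lie below everything else.

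The hard part will be to show that these two differently phrased tail conditions correspond under the map, i.e.\ that $\Lambda$ has its final $t$ parts overlined (with $t$ the number of $u$-parts of $\Lambda$) precisely when the reconstructed $\overline{C}_1$-object satisfies its own condition that the final part be overlined whenever the smallest $u$-part $x_u$ equals the number of overlined parts weakly following it. This forces a careful bookkeeping of how appending a run of non-overlined positive $u$-parts changes simultaneously the value of the smallest $u$-part and the number of overlined parts after it, together with a check that the difference conditions of $\overline C$ at the junction and the restriction $\lambda_s\notin\{0_a,0_b,\o{1}_a,\o{1}_b\}$ are never violated. As a sanity check I would verify the specialization $d=0$ (all parts overlined): there $(dq)_\infty=1$, the factor $d^{r+s}(-q/d)_{r+s}=\prod_{\ell=1}^{r+s}(d+q^\ell)$ tends to $q^{\binom{r+s+1}{2}}$, and $\Sigma$ collapses through $\sum_{r\ge 0}\frac{q^{r^2+r}a^r}{(q^2;q^2)_r}=(-aq^2;q^2)_\infty$ (and its $b$-analogue) to $(-q)_\infty(-aq^2;q^2)_\infty(-bq^2;q^2)_\infty$, recovering Theorem~\ref{th:capa-aag} as it must.
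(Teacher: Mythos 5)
Your reduction of Theorem \ref{th:capa-asym2} to Theorem \ref{th:capa-chelou} is algebraically sound as far as it goes: the two right-hand sides do differ by exactly the factor $(dq)_{\infty}$, so the theorem would follow from the combinatorial identity $\sum \overline{C}_1\, a^ib^jd^kq^n = \frac{1}{(dq)_{\infty}}\sum \overline{C}_3\, a^ib^jd^kq^n$. But the bijection you propose to establish that identity is not well-defined, and the failure is exactly at the point you flag as ``the hard part.'' Take the jagged overpartition $(2_u,\overline{1}_u)$. It satisfies the matrix $\overline{C}$ and the extra $\overline{C}_1$ condition (the smallest $u$-part has value $1$ and there is exactly $1$ overlined part weakly after it, and the final part is indeed overlined), so it is counted by $\overline{C}_1(3;1;0,0)$. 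Its maximal bottom run of non-overlined positive $u$-parts is empty, so your map returns $\mu=\emptyset$ and $\Lambda=(2_u,\overline{1}_u)$ unchanged --- but $\Lambda$ is not a $\overline{C}_3$-object, since it has $t=2$ parts of color $u$ while its final two parts are not both overlined. More structurally, the $\overline{C}_3$ condition constrains the bottom $t$ parts \emph{of whatever color}, and the surplus of $\overline{C}_1$-objects over $\overline{C}_3$-objects involves non-overlined parts of all three colors interleaved throughout the bottom of the word, not a detachable run of non-overlined $u$-parts; stripping only such a run cannot account for the factor $1/(dq)_{\infty}$.

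For comparison, the paper does not route Theorem \ref{th:capa-asym2} through Theorem \ref{th:capa-chelou} at all. It proves it directly by substituting into the master identity \eqref{contributions} a \emph{partial staircase}, $q^{\binom{r+s+t+2v+1}{2}} \to q^{\binom{r+s+t+2v+1}{2}-\binom{r+s+2v+1}{2}}d^{r+s+2v}(-q/d)_{r+s+2v}$, which is a genuine staircase on the first $t$ parts (this is what forces the smallest $t$ parts to be overlined, $t$ being the number of $u$-parts) and a generalized staircase on the remaining $r+s+2v$ parts, and then runs the same chain of $q$-series manipulations (\eqref{qnminusk}, \eqref{qChu}, \eqref{qbin1}) as in Section 2. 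If you want to salvage your route, you would need a genuinely different and more global bijection between $\overline{C}_1$-objects and pairs $(\Lambda,\mu)$; as stated, the proof has a gap.
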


Notice that if $d=0$ in equations \eqref{eq:capa-chelou} -- \eqref{eq:capa-asym2} then the only terms contributing to the sums on the left-hand sides are those with $k=0$, and it is easy to see that each of the $\overline{C}_i(n;0;i,j)$ is equal to the $C(n;i,j)$ in Theorem \ref{th:capa-aag}.   On the other hand, when $d=0$ on the right-hand sides the sums can be evaluated using  the $q$-binomial identity \cite{Ga-Ra1},
\begin{equation} \label{qbin1}
\sum_{n \geq 0} \frac{z^nq^{\binom{n+1}{2}}}{(q)_n} = (-zq)_{\infty},
\end{equation}
and we recover the product on the right-hand side of \eqref{eq:capa-aag}. It turns out that the sums also become infinite products when $d=1$, giving the following three results.  

\begin{theorem} \label{th:capa-chelounod}
We have
\begin{equation} 
\label{eq:capa-over}
\sum_{n,k,i,j \geq 0} \overline{C}_1(n;k;i,j) a^ib^jq^n = \frac{(-q)_{\infty}(-aq)_{\infty}(-bq)_{\infty}}{(q)_{\infty}(abq;q^2)_{\infty}}.
\end{equation}
\end{theorem}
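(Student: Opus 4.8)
The plan is to specialize Theorem~\ref{th:capa-chelou} to $d=1$ and evaluate the resulting double sum. Putting $d=1$ in \eqref{eq:capa-chelou} replaces the prefactor by $(-q)_\infty/(q)_\infty$ and turns $(-q/d)_{r+s}$ into $(-q)_{r+s}$; comparing with the target product in \eqref{eq:capa-over}, it therefore suffices to prove the $q$-series identity
\[
\sum_{r,s\geq0}\frac{q^{r^2+r+s^2+s-\binom{r+s+1}{2}}(-q)_{r+s}}{(q^2;q^2)_r(q^2;q^2)_s}\,a^rb^s=\frac{(-aq)_\infty(-bq)_\infty}{(abq;q^2)_\infty}.
\]
First I would record the simplification $r^2+r+s^2+s-\binom{r+s+1}{2}=\binom{r+1}{2}+\binom{s+1}{2}-rs$, so that the coefficient of $a^Rb^S$ on the left is the single term $q^{\binom{R+1}{2}+\binom{S+1}{2}-RS}(-q)_{R+S}/\big((q^2;q^2)_R(q^2;q^2)_S\big)$.

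Next I would expand the product on the right into a triple sum, applying \eqref{qbin1} to $(-aq)_\infty$ and $(-bq)_\infty$ and the companion Euler identity $1/(z;q^2)_\infty=\sum_m z^m/(q^2;q^2)_m$ to $1/(abq;q^2)_\infty$, and then read off the coefficient of $a^Rb^S$. Equating this coefficient with the single term above and clearing the common powers of $q$ --- using $\binom{R-m+1}{2}=\binom{R+1}{2}-Rm+\binom{m}{2}$ and $(R-m)(S-m)=m^2-(R+S)m+RS$ --- reduces the entire theorem to the finite identity
\begin{equation*}
\frac{(-q)_{R+S}}{(q^2;q^2)_R(q^2;q^2)_S}=\sum_{m\geq0}\frac{q^{(R-m)(S-m)}}{(q)_{R-m}(q)_{S-m}(q^2;q^2)_m},\qquad R,S\geq0.
\tag{$\star$}
\end{equation*}

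To establish $(\star)$ I would pass to the bivariate generating function $\sum_{R,S\geq0}(\cdots)\,x^Ry^S$ of each side. On the right, the substitution $i=R-m$, $j=S-m$ decouples the sum, and combining $\sum_m w^m/(q^2;q^2)_m=1/(w;q^2)_\infty$ with the standard consequence of the $q$-binomial theorem $\sum_{i,j\geq0}\frac{q^{ij}x^iy^j}{(q)_i(q)_j}=\frac{(xy;q)_\infty}{(x;q)_\infty(y;q)_\infty}$ yields the closed form $\frac{(xyq;q^2)_\infty}{(x;q)_\infty(y;q)_\infty}$, after using $(xy;q)_\infty/(xy;q^2)_\infty=(xyq;q^2)_\infty$. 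It then remains to show that the left-hand generating function $L(x,y)=\sum_{R,S\geq0}(-q)_{R+S}x^Ry^S/\big((q^2;q^2)_R(q^2;q^2)_S\big)$ equals this same product $P(x,y)$. For this I would check that both $L$ and $P$ satisfy the $q$-difference equation
\[
(1-xyq)\,f(q^2x,y)=(1-x)(1-xq)\,f(x,y)
\]
and agree at $x=0$, where each equals $1/(y;q)_\infty$. For $P$ the functional equation is immediate from cancelling the appropriate factors in its infinite products; comparing coefficients of $x^R$ in the equation expresses $f_R(y)$ in terms of $f_{R-1}(y)$ and $f_{R-2}(y)$ (the factor $q^{2R}-1$ never vanishes for $R\geq1$), so the equation and the value at $x=0$ pin down the series uniquely. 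Hence $L=P$, which is $(\star)$ in generating-function form.

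The main obstacle is verifying the $q$-difference equation for the double sum $L$: unwound, it is the two-index recurrence
\[
q^{2R}c_{R,S}-q^{2R-1}c_{R-1,S-1}=c_{R,S}-(1+q)c_{R-1,S}+q\,c_{R-2,S},\qquad c_{R,S}=\frac{(-q)_{R+S}}{(q^2;q^2)_R(q^2;q^2)_S},
\]
an elementary but slightly delicate identity among $q$-Pochhammer symbols. Everything else is bookkeeping, so once this recurrence is checked the displayed identities chain together to give \eqref{eq:capa-over}.
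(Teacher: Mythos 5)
Your argument is correct, but it takes a genuinely different route from the paper's. The paper does \emph{not} set $d=1$ in the final double-sum form of Theorem \ref{th:capa-chelou}; instead it returns to the intermediate four-fold sum \eqref{Cbar14vargf}, where at $d=1$ the sum over $t$ collapses via \eqref{qbin2} to produce $(-q)_{\infty}/(q)_{\infty}$, and the remaining triple sum over $r,s,v$ factors immediately into $(-aq)_{\infty}(-bq)_{\infty}/(abq;q^2)_{\infty}$ by \eqref{qbin1} and Euler's theorem --- a three-line computation. You instead start from the double sum and must prove the nontrivial evaluation
\[
\sum_{r,s\geq0}\frac{(-q)_{r+s}\,x^ry^s}{(q^2;q^2)_r(q^2;q^2)_s}=\frac{(xyq;q^2)_{\infty}}{(x;q)_{\infty}(y;q)_{\infty}},
\]
which is essentially the chain of equalities after \eqref{keyplace} run in reverse (your identity $(\star)$ is the inverse of the $q$-Chu--Vandermonde step \eqref{qChu}). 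I checked your reduction to $(\star)$, the closed form of the right-hand generating function, and the two-index recurrence for $c_{R,S}$ (which does hold, including the boundary cases $R=0,1$ with the convention that $c_{R,S}$ vanishes for negative indices), and the uniqueness argument from the $q$-difference equation plus the value at $x=0$ is sound since $q^{2R}-1\neq0$ for $R\geq1$. So your proof is complete modulo the routine verification you flagged; what it buys is a self-contained proof of an attractive double-sum identity, at the cost of considerably more work than the paper's shortcut through the combinatorial four-fold sum.
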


\begin{theorem} \label{th:capa-asym1nod}
We have
\begin{equation} 
\label{eq:capa-asym1nod}
\sum_{n,k,i,j \geq 0} \overline{C}_2(n;k;i,j) a^ib^jq^n = (-q)_{\infty}(-aq)_{\infty}(-bq^2;q^2)_{\infty}.
\end{equation}
\end{theorem}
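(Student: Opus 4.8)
The plan is to obtain Theorem \ref{th:capa-asym1nod} as the specialization $d=1$ of Theorem \ref{th:capa-asym1}, which I may assume. On the left-hand side of \eqref{eq:capa-asym1}, setting $d=1$ removes the weight $d^k$ and leaves $\sum_{n,k,i,j\geq 0}\overline{C}_2(n;k;i,j)a^ib^jq^n$, which is precisely the left-hand side of \eqref{eq:capa-asym1nod}. On the right-hand side, the substitution gives $(-q/d)_r \to (-q)_r$ and $(ad)^r \to a^r$, so the whole identity reduces to evaluating the prefactor $(-q)_\infty(-bq^2;q^2)_\infty$ times the single series $S := \sum_{r\geq 0} q^{\binom{r+1}{2}}a^r(-q)_r/(q^2;q^2)_r$.

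The one real step is to put $S$ in closed form. I would use the factorization $1-q^{2k}=(1-q^k)(1+q^k)$, which yields $(q^2;q^2)_r=(q)_r(-q)_r$. The factor $(-q)_r$ in the numerator then cancels against the $(-q)_r$ in the denominator, and $S$ collapses to
$$ S = \sum_{r\geq 0}\frac{q^{\binom{r+1}{2}}a^r}{(q)_r}. $$
This is exactly the left-hand side of the $q$-binomial identity \eqref{qbin1} with $z=a$, so $S=(-aq)_\infty$.

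Substituting back, the right-hand side of the specialized identity becomes $(-q)_\infty(-aq)_\infty(-bq^2;q^2)_\infty$, which matches the claim. I expect no genuine obstacle here: since Theorem \ref{th:capa-asym1} already supplies the full two-variable generating function, no new combinatorial bijection or recurrence is needed, and the argument is pure $q$-series manipulation. The only point requiring any care is recognizing the cancellation $(-q)_r/(q^2;q^2)_r = 1/(q)_r$ that turns the sum into a form to which \eqref{qbin1} applies; once this is spotted the proof is immediate.
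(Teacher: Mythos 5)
Your proposal is correct and is essentially the paper's own proof: the paper likewise deduces Theorem \ref{th:capa-asym1nod} from Theorem \ref{th:capa-asym1} by setting $d=1$ and ``evaluating the sum over $r$,'' which is exactly your cancellation $(-q)_r/(q^2;q^2)_r = 1/(q)_r$ followed by \eqref{qbin1} with $z=a$. You have merely made explicit a step the paper leaves to the reader.
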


\begin{theorem} \label{th:capa-asym2nod}
We have
\begin{equation} 
\label{eq:capa-asym2nod}
\sum_{n,k,i,j \geq 0} \overline{C}_3(n;k;i,j) a^ib^jq^n = \frac{(-q)_{\infty}(-aq)_{\infty}(-bq)_{\infty}}{(abq;q^2)_{\infty}}.
\end{equation}
\end{theorem}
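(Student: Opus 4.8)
The plan is to deduce the result from the already-proved Theorem~\ref{th:capa-asym2} by specializing $d=1$, which turns the right-hand side of \eqref{eq:capa-asym2} into a product. Setting $d=1$ gives $d^{r+s}=1$ and $(-q/d)_{r+s}=(-q)_{r+s}$, so the left-hand side of \eqref{eq:capa-asym2nod} equals $(-q)_\infty$ times the double sum
\begin{equation*}
\sum_{r,s\ge0}\frac{q^{r^2+r+s^2+s-\binom{r+s+1}{2}}a^rb^s(-q)_{r+s}}{(q^2;q^2)_r(q^2;q^2)_s}.
\end{equation*}
Cancelling the factor $(-q)_\infty$, the theorem is equivalent to the pure $q$-series evaluation
\begin{equation*}
\sum_{r,s\ge0}\frac{q^{r^2+r+s^2+s-\binom{r+s+1}{2}}a^rb^s(-q)_{r+s}}{(q^2;q^2)_r(q^2;q^2)_s}=\frac{(-aq)_\infty(-bq)_\infty}{(abq;q^2)_\infty}.\tag{$\dagger$}
\end{equation*}
No further combinatorics is needed: everything reduces to proving $(\dagger)$.

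To prove $(\dagger)$ I would compare the coefficients of $a^rb^s$ on the two sides. On the right I expand $(-aq)_\infty$ and $(-bq)_\infty$ by \eqref{qbin1} and $1/(abq;q^2)_\infty$ by the $q$-binomial theorem in base $q^2$, so that the coefficient of $a^rb^s$ is $\sum_{t=0}^{\min(r,s)}q^{\binom{r-t+1}{2}+\binom{s-t+1}{2}+t}/\bigl((q)_{r-t}(q)_{s-t}(q^2;q^2)_t\bigr)$, where $t$ is the exponent taken from $1/(abq;q^2)_\infty$. Using $r^2+r+s^2+s-\binom{r+s+1}{2}=\binom{r+1}{2}+\binom{s+1}{2}-rs$ on the left, $\binom{r-t+1}{2}+\binom{s-t+1}{2}+t=\binom{r+1}{2}+\binom{s+1}{2}-t(r+s)+t^2$ on the right, and the collapse $rs-t(r+s)+t^2=(r-t)(s-t)$, the comparison reduces, after dividing by $q^{\binom{r+1}{2}+\binom{s+1}{2}-rs}$, to the finite identity
\begin{equation*}
\frac{(-q)_{r+s}}{(q^2;q^2)_r(q^2;q^2)_s}=\sum_{t=0}^{\min(r,s)}\frac{q^{(r-t)(s-t)}}{(q)_{r-t}(q)_{s-t}(q^2;q^2)_t}.\tag{$\star$}
\end{equation*}

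I would establish $(\star)$ with a two-variable generating function argument. Multiplying the right-hand side of $(\star)$ by $x^ry^s$, summing over $r,s$, and setting $u=r-t$, $v=s-t$ factors it as $\bigl(\sum_t(xy)^t/(q^2;q^2)_t\bigr)\bigl(\sum_{u,v}q^{uv}x^uy^v/((q)_u(q)_v)\bigr)$; the first factor is $1/(xy;q^2)_\infty$ and the second is $(xy;q)_\infty/\bigl((x;q)_\infty(y;q)_\infty\bigr)$ after summing first over $v$ and then applying the $q$-binomial theorem in $u$. Since $(xy;q)_\infty=(xy;q^2)_\infty(xyq;q^2)_\infty$, this generating function collapses to $P(x,y):=(xyq;q^2)_\infty/\bigl((x;q)_\infty(y;q)_\infty\bigr)$. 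It then remains to show that $H(x,y):=\sum_{r,s}\frac{(-q)_{r+s}}{(q^2;q^2)_r(q^2;q^2)_s}x^ry^s$, the generating function of the left-hand side of $(\star)$, also equals $P(x,y)$. I would do this by verifying that $H$ and $P$ both satisfy the $q$-difference equation $(1-x)(1-xq)F(x,y)=(1-xyq)F(xq^2,y)$ with initial value $F(0,y)=1/(y;q)_\infty$, and that these conditions determine $F$ uniquely, since the resulting coefficient recurrence solves for the coefficient of $x^r$ in terms of lower powers whenever $1-q^{2r}\ne0$.

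The hard part is this last evaluation $H(x,y)=P(x,y)$: the summand of $H$ mixes the base-$q$ symbol $(-q)_{r+s}$ with the base-$q^2$ symbols $(q^2;q^2)_r,(q^2;q^2)_s$, so there is no one-step summation theorem to apply, and the tidy collapse visible on the right-hand side of $(\star)$ has no obvious analogue on the left. Verifying the $q$-difference equation for $H$ will instead require the bookkeeping $(-q)_{r+s}=(-q)_{r+s-2}(1+q^{r+s-1})(1+q^{r+s})$ together with telescoping of the factors $1-q^{2k}$; this is elementary but is where the genuine computation lies. Finally I note that, under the same $d=1$ reduction, the right-hand sides of Theorem~\ref{th:capa-chelounod} and this theorem differ exactly by $1/(q;q)_\infty$, as do the corresponding sums coming from Theorems~\ref{th:capa-chelou} and~\ref{th:capa-asym2}; hence $(\dagger)$ proves both product evaluations simultaneously.
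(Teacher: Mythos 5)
Your proposal is correct, but it takes a genuinely different route from the paper. The paper does not specialize $d=1$ in the finished double sum of Theorem \ref{th:capa-asym2}; it backs up to the intermediate quadruple sum over $r,s,t,v$ (the stage of the computation before the $q$-Chu--Vandermonde collapse of the $v$-sum), sets $d=1$ there, and then everything factors at once: the sum over $t$ combines with $(-q)_{r+s+2v}$ via \eqref{qbin1} to produce $(-q)_{\infty}$, and the surviving sums over $r$, $s$, $v$ give $(-aq)_{\infty}$, $(-bq)_{\infty}$ and $1/(abq;q^2)_{\infty}$. Your route instead proves the product evaluation $(\dagger)$ head-on by comparing coefficients of $a^rb^s$; your exponent bookkeeping is right and the reduction to $(\star)$ is valid. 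What you may not have noticed is that $(\star)$ is exactly the $q$-Chu--Vandermonde step the paper already performs in proving Theorem \ref{th:capa-asym2}, read backwards: rewriting $1/(q)_{r-t}$ and $1/(q)_{s-t}$ by \eqref{qnminusk} turns the right-hand side of $(\star)$ into $\sum_{t}q^{t}(q^{-r})_{t}(q^{-s})_{t}/\bigl((q)_{r}(q)_{s}(q^{2};q^{2})_{t}\bigr)$, and since $(q^{2};q^{2})_{t}=(q)_{t}(-q)_{t}$, identity \eqref{qChu} with $c=-q$ evaluates this as $(-q)_{r+s}/\bigl((q^{2};q^{2})_{r}(q^{2};q^{2})_{s}\bigr)$. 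So the two-variable generating function $H(x,y)$, the $q$-difference equation and the uniqueness argument --- the part you flag as where ``the genuine computation lies'' --- can be replaced by a one-line citation of \eqref{qChu}; that said, the equation $(1-x)(1-xq)H(x,y)=(1-xyq)H(xq^{2},y)$ does hold and reduces to a short identity among the coefficients, so your longer path also closes. Your final observation that the same evaluation yields Theorem \ref{th:capa-chelounod}, the two sides differing by $1/(q)_{\infty}$ at $d=1$, is correct. In short: the paper resums the four-fold sum in a different order, while you prove the equivalent $q$-series identity directly, at the cost of reproving a special case of $q$-Chu--Vandermonde.
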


Theorems \ref{th:capa-chelou} -- \ref{th:capa-asym2nod} contain a number of interesting corollaries.    For example, if we let $q \to q^3, a \to aq^{-2}$ and $b \to bq^{-4}$ in Theorem \ref{th:capa-asym1} we obtain the following generalization of Capparelli's identity, where we use the notation
$$\chi(\o{\lambda_i}) = \begin{cases}
1 \text{ if $\lambda_i$ is overlined}, \\
0 \text{ otherwise.} \end{cases}$$

\begin{corollary} \label{cor1}
Let $\overline{\mathcal{C}}(n;k;i,j)$ denote the number of jagged overpartitions $(\lambda_1,\dots,\lambda_s)$ of $n$ into positive parts, $k$ of which are non-overlined, $i$ (resp. $j$) of which are congruent to $1$ (resp. $2$) modulo $3$,  with $\lambda_s \neq \overline{1}$, such that the difference between $\lambda_i$ and $\lambda_{i+1}$ is 
$$ \lambda_i -\lambda_{i+1} \geq \begin{cases}
-1+3 \chi(\o{\lambda_i}) \text{ if $\lambda_i + \lambda_{i+1} \equiv 0 \mod 3$}, \\
1 + 3 \chi(\o{\lambda_i}) \text{ otherwise},
\end{cases}$$
and the final $t$ parts are overlined, where $t$ is the number of parts congruent to $0$ or $2$ modulo $3$.    Let $\overline{\mathcal{D}}(n;k;i,j)$ denote the number of partitions of $n$ into distinct parts not congruent to $5 $ modulo $6$, $i$ (resp. $j$) of which are congruent to $1$ (resp. $2$) modulo $3$, with $k$ the number of times $\lambda^{(1)}_i - \lambda^{(1)}_{i+1} \equiv 3 \mod{6}$,  where $(\lambda^{(1)}_1,\dots,\lambda^{(1)}_s)$ is the subpartition of parts congruent to $1$ modulo $3$ (and we take $\lambda^{(1)}_{s+1} = -2$).     Then $\overline{\mathcal{C}}(n;k;i,j) = \overline{\mathcal{D}}(n;k;i,j)$.
\end{corollary}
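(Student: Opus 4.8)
The plan is to read off Corollary \ref{cor1} as the image of Theorem \ref{th:capa-asym1} under the Alladi--Andrews--Gordon substitution $q \to q^3$, $a \to aq^{-2}$, $b \to bq^{-4}$, and then to reinterpret both sides of \eqref{eq:capa-asym1} combinatorially. The underlying dictionary sends a part of size $m$ and color $a$, $b$, $u$ to a part of size $3m-2$, $3m-4$, $3m$ respectively, so that the three colors correspond to the residues $1$, $2$, $0 \pmod 3$, and the weight $q^{n}a^{i}b^{j}$ of a jagged overpartition is carried to $q^{3n-2i-4j}a^{i}b^{j}$, which is $q$ raised to the sum of the dilated parts.

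Handling the left-hand side first, I would feed the difference matrix $\o{C}$ of \eqref{Cappdiffmatrix-1} through this dictionary and check that the six transformed rows collapse to the single piecewise rule stated in the corollary: when $\lambda_i+\lambda_{i+1}\equiv 0 \pmod 3$ the minimal difference is $-1$ (raised to $2$ when $\lambda_i$ is overlined) and otherwise it is $1$ (raised to $4$), which is exactly $-1+3\chi(\o{\lambda_i})$ resp. $1+3\chi(\o{\lambda_i})$. I would also verify that the minimal-part and zero-part conventions $\lambda_s \notin\{0_a,0_b,\o{1}_a,\o{1}_b\}$ translate into ``positive parts with $\lambda_s\neq\o{1}$,'' and that the extra $\o{C}_2$-condition ``the final $\ell$ parts are overlined, $\ell$ the number of parts of color $b$ or $u$'' becomes ``the final $t$ parts are overlined, $t$ the number of parts congruent to $0$ or $2 \pmod 3$.'' After these checks the left-hand side of \eqref{eq:capa-asym1} is precisely $\sum \o{\mathcal{C}}(n;k;i,j)a^ib^jd^kq^n$.

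The right-hand side is the crux. Under the substitution, $(-q)_\infty$ becomes $(-q^3;q^3)_\infty$, the generating function for distinct parts $\equiv 0 \pmod 3$, and $(-bq^2;q^2)_\infty$ becomes $(-bq^2;q^6)_\infty$, the generating function for distinct parts $\equiv 2 \pmod 6$ each marked by $b$; neither factor carries $d$. The $r$-sum becomes
\begin{equation*}
\sum_{r\geq 0}\frac{a^rd^r q^{r(3r-1)/2}(-q^3/d;q^3)_r}{(q^6;q^6)_r},
\end{equation*}
and I claim this is the generating function for partitions into $r$ distinct parts $\equiv 1 \pmod 3$, with $a$ marking the number of parts and $d$ marking the number of consecutive differences $\equiv 3 \pmod 6$ (using the convention $\lambda^{(1)}_{s+1}=-2$). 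To see this I would write such a partition through its gaps: set $\delta_i=3e_i$ with $e_i\geq 1$ for $i=1,\dots,r$ (the last gap being $\lambda^{(1)}_r+2$), so that $\lambda^{(1)}_j=\sum_{i\geq j}\delta_i-2$, whence $\sum_j\lambda^{(1)}_j=\sum_{i=1}^r i\delta_i-2r$ and the total $q$-weight is $q^{-2r}\prod_{i=1}^r q^{3ie_i}$. A gap is $\equiv 3 \pmod 6$ exactly when $e_i$ is odd, so marking odd $e_i$ by $d$ yields the factor $\sum_{e\geq 1}d^{[e\text{ odd}]}q^{3ie}=q^{3i}(d+q^{3i})/(1-q^{6i})$ for each $i$; multiplying over $i$ and absorbing $q^{-2r}$ produces exactly the displayed summand. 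Combining the three pieces, the distinct parts produced realize the residues $0,1,2,3,4 \pmod 6$ but never $5$, and parts from the three families are automatically distinct since they lie in distinct residue classes modulo $3$; hence the product counts partitions into distinct parts not $\equiv 5 \pmod 6$. Thus the right-hand side equals $\sum \o{\mathcal{D}}(n;k;i,j)a^ib^jd^kq^n$, and the corollary follows by comparing coefficients.

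The main obstacle I anticipate is not any single hard step but the careful bookkeeping at two places: on the left, pinning down the minimal-part and zero-part conventions so that the image really is a partition into positive parts with $\lambda_s\neq\o{1}$; and on the right, getting the boundary convention $\lambda^{(1)}_{s+1}=-2$ to correctly decide whether the smallest part $\equiv 1 \pmod 3$ (equivalently, whether it is $\equiv 1$ or $\equiv 4 \pmod 6$) contributes a factor of $d$. Once these conventions are aligned, the gap computation and the product identification are routine.
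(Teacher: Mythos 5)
Your proposal is correct and follows essentially the same route as the paper: Corollary \ref{cor1} is read off from Theorem \ref{th:capa-asym1} under the dilation $q\to q^{3}$, $a\to aq^{-2}$, $b\to bq^{-4}$, with the left-hand side translating the difference matrix and boundary conventions and the right-hand side splitting into the three residue families modulo $6$. The only point where you proceed differently is the identification of the $r$-sum as the generating function in which $d$ marks differences $\equiv 3\pmod 6$ in the subpartition of parts $\equiv 1\pmod 3$: you establish this by a direct gap computation $\delta_i=3e_i$ in the dilated variables, whereas the paper derives the undilated statement \eqref{combrsum} ($d$ marking parity changes among distinct parts, with a ``phantom $0$'' at the end) from its staircase and generalized-staircase construction in Section 5 and then dilates; the phantom $0$ is exactly your convention $\lambda^{(1)}_{s+1}=-2$. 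The two derivations are equivalent and your computation checks out, so this is the paper's argument with one ingredient re-proved more explicitly.
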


Similarly, letting $q \to q^4, a \to aq, b \to bq^{-2}$ in Theorem \ref{th:capa-chelounod}, we obtain the following  identity.

\begin{corollary} \label{cor2}
Let $\overline{\mathcal{C}'}(n;i,j)$ denote the number of jagged overpartitions $(\lambda_1,\dots,\lambda_s)$ of $n$ into positive parts not congruent to $3$ modulo $4$, where $i$ (resp. $j$) is the number of parts congruent to $1$ (resp. $2$) modulo $4$, such that $\lambda_s \neq \overline{1}, 1, \o{2}, \o{5}$ and the difference between $\lambda_i$ and $\lambda_{i+1}$ is 
$$ \lambda_i -\lambda_{i+1} \geq \begin{cases}
-2+4 \chi(\o{\lambda_i}) \text{ if $\lambda_i \equiv 0 \mod 4$ or $\lambda_i + \lambda_{i+1} \equiv 3 \mod 4$}, \\
4 + 4 \chi(\o{\lambda_i}) \text{ otherwise},
\end{cases}$$
with the added condition that if the smallest part $x$ congruent to $0$ modulo $4$ is equal to $4$ times the number of overlined parts which come after $x$ (with $x$ included), then the final part must be overlined.
Let $\overline{\mathcal{D}'}(n;i,j)$ denote the number of overpartitions of $n$ with overlined parts not congruent to $3$ modulo $4$, no part $\o{1}$, and non-overlined parts congruent to $0,3,4$ modulo $8$, where $i$ is the number of parts congruent to $1 \mod 4$ or $3 \mod 8$ and $j$ is the number of parts congruent to $2 \mod 4$ or $3 \mod 8$. Then $\overline{\mathcal{C}'}(n;i,j) = \overline{\mathcal{D}'}(n;i,j)$.
\end{corollary}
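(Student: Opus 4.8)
The plan is to obtain Corollary \ref{cor2} from Theorem \ref{th:capa-chelounod} by performing the dilation $q \to q^4$ together with the translations $a \to aq$ and $b \to bq^{-2}$ in \eqref{eq:capa-over}, and then reading off the two sides as the generating functions for $\o{\mathcal{C}'}(n;i,j)$ and $\o{\mathcal{D}'}(n;i,j)$. This substitution sends $a^ib^jq^n$ to $a^ib^jq^{4n+i-2j}$, and since every $b$-part has size at least $1$ (the part $0_b$ being impossible, as a $b$-part forces a strict descent and so can be neither interior nor last) one has $n \ge j$, whence $4n+i-2j \ge 0$ and the substitution produces an honest power series. It therefore suffices to match each side combinatorially.

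For the left-hand side, under the dilation a part of colour $a$, $b$ or $u$ and size $m$ picks up the exponent shift $\delta(a)=1$, $\delta(b)=-2$, $\delta(u)=0$ on top of $q^{4m}$, so it becomes a part congruent to $1$, $2$ or $0 \bmod 4$ respectively; thus the image has parts $\not\equiv 3 \bmod 4$, with $i$ and $j$ counting the parts $\equiv 1$ and $\equiv 2 \bmod 4$. A minimal difference $\o{C}(x,y)$ between consecutive parts of base colours $x,y$ is transformed into $d_{x,y}:=4\,\o{C}(x,y)+\delta(x)-\delta(y)$, and the uniform gap of $1$ between the overlined and non-overlined rows of $\o{C}$, dilated by $4$, produces the term $+4\chi(\o{\lambda_i})$. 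The key observation is that each colour now occupies a fixed residue class modulo $4$, so $\lambda_i-\lambda_{i+1}$ is itself forced into the class of $\delta(x)-\delta(y) \bmod 4$, of which $d_{x,y}$ is the unique representative in the window $(d_{x,y}-4,\,d_{x,y}]$. Hence replacing the nine true bounds $d_{x,y}$ by the two uniform values $-2$ and $4$, which lie in the relevant windows, changes nothing; checking the nine cases shows that the pairs falling to the bound $-2$ are exactly those with $\lambda_i\equiv 0 \bmod 4$ or $\lambda_i+\lambda_{i+1}\equiv 3 \bmod 4$, which is the condition stated in the corollary.

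Next I would translate the boundary data. The forbidden final parts $0_a$, $\o{1}_a$, $\o{1}_b$ map to $1$, $\o{5}$, $\o{2}$; moreover a size-one part, being the image of $0_a$, is never overlined, so $\o{1}$ cannot occur either, and altogether $\lambda_s \neq \o{1},1,\o{2},\o{5}$. One must also check that the dilation never produces a non-positive part, that is, that the smallest $u$-part is never $0_u$ (which would map to $0$). This is exactly where the extra clause defining $\o{C}_1$ is used: were $0_u$ present we would have $x_u=0$, while the difference conditions confine everything following a $0_u$ to non-overlined parts (the sole overlined candidate $\o{1}_b$ being blocked, since it can be neither interior nor final); hence the number of trailing overlined parts equals $0=x_u$, the clause forces $\lambda_s$ to be overlined, and we reach a contradiction. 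Thus $0_u$ never occurs, the image consists only of positive parts, and the $\o{C}_1$ clause transforms verbatim into the stated condition on the smallest part $\equiv 0 \bmod 4$.

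Finally, the same substitution turns the right-hand side of \eqref{eq:capa-over} into
\[
\frac{(-q^4;q^4)_{\infty}(-aq^5;q^4)_{\infty}(-bq^2;q^4)_{\infty}}{(q^4;q^4)_{\infty}(abq^3;q^8)_{\infty}},
\]
whose factors generate, respectively, the distinct overlined parts $\equiv 0$, $\equiv 1$ (none equal to $\o{1}$) and $\equiv 2 \bmod 4$ weighted by $1,a,b$, and the non-overlined parts $\equiv 0 \bmod 4$ and $\equiv 3 \bmod 8$ (the latter weighted by $ab$). This is precisely $\sum \o{\mathcal{D}'}(n;i,j)a^ib^jq^n$ with the stated meaning of $i$ and $j$, and comparing coefficients of $a^ib^jq^n$ yields $\o{\mathcal{C}'}(n;i,j)=\o{\mathcal{D}'}(n;i,j)$. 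I expect the main obstacle to be the boundary analysis of the third paragraph: verifying that the exceptional set $\{\o{1},1,\o{2},\o{5}\}$ is exactly right and, above all, invoking the $\o{C}_1$ clause to rule out the part $0_u$ so that the image consists only of positive parts. The nine-case difference bookkeeping of the second paragraph is routine once the residue-collapse principle is in hand.
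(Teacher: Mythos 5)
Your proposal is correct and follows exactly the route the paper intends: the paper derives Corollary \ref{cor2} by the one-line instruction to set $q \to q^4$, $a \to aq$, $b \to bq^{-2}$ in Theorem \ref{th:capa-chelounod}, and you have simply filled in the routine verifications (the residue-collapse of the nine dilated difference bounds, the boundary set $\{\o{1},1,\o{2},\o{5}\}$, the use of the $\o{C}_1$ clause to exclude $0_u$, and the reading of the dilated product). All of these checks are accurate, so there is nothing to add.
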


Finally, by doing the same dilation in Theorem \ref{th:capa-asym2nod}, we obtain the following.

\begin{corollary} \label{cor3}
Let $\overline{\mathcal{C}''}(n;i,j)$ denote the number of jagged overpartitions $(\lambda_1,\dots,\lambda_s)$ of $n$ into positive parts not congruent to $3$ modulo $4$, where $i$ (resp. $j$) is the number of parts congruent to $1$ (resp. $2$) modulo $4$, such that $\lambda_s \neq \overline{1}, 1, \o{2}, \o{5}$ and the difference between $\lambda_i$ and $\lambda_{i+1}$ is 
$$ \lambda_i -\lambda_{i+1} \geq \begin{cases}
-2+4 \chi(\o{\lambda_i}) \text{ if $\lambda_i \equiv 0 \mod 4$ or $\lambda_i + \lambda_{i+1} \equiv 3 \mod 4$}, \\
4 + 4 \chi(\o{\lambda_i}) \text{ otherwise},
\end{cases}$$
and the final $t$ parts must be overlined, where $t$ is the number of parts congruent to $0 \mod 4$.
Let $\overline{\mathcal{D}''}(n;i,j)$ denote the number of overpartitions of $n$ with overlined parts not congruent to $3$ modulo $4$, no part $\o{1}$, and non-overlined parts congruent to $3$ modulo $8$, where $i$ is the number of parts congruent to $1 \mod 4$ or $3 \mod 8$ and $j$ is the number of parts congruent to $2 \mod 4$ or $3 \mod 8$.     Then $\overline{\mathcal{C}''}(n;i,j) = \overline{\mathcal{D}''}(n;i,j)$.
\end{corollary}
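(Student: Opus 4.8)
The plan is to deduce Corollary~\ref{cor3} from Theorem~\ref{th:capa-asym2nod} by applying the dilation $q \to q^4$, $a \to aq$, $b \to bq^{-2}$, exactly as in the proof of Corollary~\ref{cor2}. Since Theorem~\ref{th:capa-asym2nod} is already the $d=1$ specialization, I would substitute into both sides of \eqref{eq:capa-asym2nod} and read off the two combinatorial interpretations. On the product side, the dilation sends
$$\frac{(-q)_\infty (-aq)_\infty(-bq)_\infty}{(abq;q^2)_\infty} \longmapsto \frac{(-q^4;q^4)_\infty (-aq^5;q^4)_\infty (-bq^2;q^4)_\infty}{(abq^3;q^8)_\infty},$$
and I would interpret each factor: $(-q^4;q^4)_\infty$ gives distinct (overlined) parts $\equiv 0 \pmod 4$; $(-aq^5;q^4)_\infty$ gives distinct parts $\equiv 1 \pmod 4$ weighted by $a$, but with no part $\o1$; $(-bq^2;q^4)_\infty$ gives distinct parts $\equiv 2 \pmod 4$ weighted by $b$; and $1/(abq^3;q^8)_\infty$ gives non-overlined parts $\equiv 3 \pmod 8$, each weighted by $ab$. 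This is precisely the generating function for $\o{\mathcal{D}''}(n;i,j)$. The only change from Corollary~\ref{cor2} is that the factor $1/(q)_\infty$ is absent, which removes the non-overlined parts $\equiv 0 \pmod 4$ and leaves non-overlined parts $\equiv 3 \pmod 8$ only.

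For the left-hand (combinatorial) side, I would use the color-to-residue dictionary induced by the dilation: a part of color $a$ and value $m$ maps to $4m+1 \equiv 1 \pmod 4$, a part of color $b$ and value $m$ to $4m-2 \equiv 2 \pmod 4$, and a part of color $u$ and value $m$ to $4m \equiv 0 \pmod 4$, with overlining preserved, so that $i$ (resp. $j$) becomes the number of parts $\equiv 1$ (resp. $\equiv 2$) mod $4$. The key verification is that the difference matrix $\o C$ translates into the stated piecewise bound. Writing the dilated difference of two consecutive parts as $4(m_i - m_{i+1}) + (\mathrm{offset}(x)-\mathrm{offset}(y))$, where the offset is $1,-2,0$ for colors $a,b,u$ respectively, I would check for each of the nine color pairs that the minimal admissible dilated difference equals the smallest element of the relevant residue class mod $4$ that is $\geq -2$ (when $\lambda_i \equiv 0 \pmod 4$ or $\lambda_i + \lambda_{i+1} \equiv 3 \pmod 4$) and $\geq 4$ otherwise; since overlining $\lambda_i$ raises the matrix entry by $1$ it raises the dilated difference by $4$, giving the $+4\chi(\o{\lambda_i})$ term. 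The forbidden last parts $\{0_a,0_b,\o1_a,\o1_b\}$ map to $\{1,\ \text{(impossible)},\ \o5,\ \o2\}$; one checks that $0_b$ never occurs at all (it can be followed by no part, yet cannot be final), while $\o1$ would require an overlined value-$0$ part and so never occurs. This produces the condition $\lambda_s \neq \o1,1,\o2,\o5$.

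Finally I would translate the extra condition of Theorem~\ref{th:capa-asym2nod}. The $u$-colored parts of value $\geq 1$ correspond to the positive parts $\equiv 0 \pmod 4$, so ``$t$ is the number of $u$-parts'' becomes ``$t$ is the number of parts $\equiv 0 \pmod 4$,'' and ``the final $t$ parts are overlined'' carries over verbatim, the dilation preserving both the order of the parts and their overlining. The main obstacle, and the point requiring the most care, is the treatment of value-$0$ parts: a $0_u$-part would dilate to the invisible part $0$ and corrupt the count of parts $\equiv 0 \pmod 4$. I would resolve this by showing that the extra condition forbids $0_u$-parts outright: tracing the difference conditions shows that once a $0_u$-part appears, the only admissible continuations are $0_a$, $0_u$, and $1_b$ (while $\o1_b$ cannot occur anywhere), all non-overlined, so every subsequent part is non-overlined and the final part is non-overlined, contradicting ``final $t$ parts overlined'' whenever $t \geq 1$. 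Hence no $0_u$-part occurs, each $u$-part dilates to a genuine positive part $\equiv 0 \pmod 4$, and the two generating functions agree, proving $\o{\mathcal{C}''}(n;i,j) = \o{\mathcal{D}''}(n;i,j)$.
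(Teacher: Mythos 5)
Your proposal is correct and follows exactly the route the paper intends: the paper derives Corollary \ref{cor3} simply by remarking that one performs the dilation $q \to q^4$, $a \to aq$, $b \to bq^{-2}$ in Theorem \ref{th:capa-asym2nod}, with no further details given. Your verification of the nine dilated difference conditions, of the excluded final parts $\o1,1,\o2,\o5$, and in particular of the fact that the ``final $t$ parts overlined'' condition forces the absence of $0_u$-parts (so that $t$ really does become the number of parts $\equiv 0 \bmod 4$) is sound and supplies precisely the details the paper omits.
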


This paper is organized as follows.    In the next section we give a detailed proof of Theorem \ref{th:capa-aag} which is different from those of Alladi, Andrews and Gordon in \cite{Al-An-Go1}.   Ours uses some combinatorial reasoning with a staircase partition and $q$-series identities, while theirs also relied on properties of the $q$-binomial coefficients and/or $q$-difference equations.  Variations of our proof lead to Theorems \ref{th:capa-chelou} -- \ref{th:capa-asym2} and Theorems \ref{th:capa-chelounod} -- \ref{th:capa-asym2nod}.   These arguments are presented in Sections 3 and 4.   In Section 5 we discuss the combinatorial meaning of the $q$-series in Theorems \ref{th:capa-chelou} -- \ref{th:capa-asym2} and illustrate Corollaries \ref{cor1} -- \ref{cor3} with examples.   We close in Section 6 with a discussion of related work on a partition identity of Schur and some suggestions for future research.

\section{Proof of Theorem \ref{th:capa-aag}}
\label{sec:capa}
We begin with a proof of Theorem \ref{th:capa-aag} via a constant term identity.   This was alluded to at the end of \cite{Lo.5}, though no details were provided there.   We will show that   
\begin{equation} \label{constantterm}
[z^0] (-q/z)_{\infty}(-z)_{\infty} (q)_{\infty} \frac{(-azq)_{\infty}(-bzq)_{\infty}}{(z)_{\infty}(abz^2q;q^2)_{\infty}} = (-q)_{\infty}(-aq^2;q^2)_{\infty}(-bq^2;q^2)_{\infty}
\end{equation}
and then argue that the left-hand side is the generating function for $C(n;i,j)$.     To prove \eqref{constantterm}, we require the identities \cite{Ga-Ra1}
\begin{equation} \label{qnminusk}
(q)_{n-k} = \frac{(q)_n}{(q^{-n})_k}(-1)^kq^{\binom{k}{2} -nk},
\end{equation}
\begin{equation} \label{qChu}
\sum_{k=0}^n \frac{(a)_k(q^{-n})_kq^k}{(q)_k(c)_k} = \frac{(c/a)_na^n}{(c)_n},
\end{equation}
\begin{equation} \label{qbin2}
\sum_{n \geq 0} \frac{z^n(-x)_n}{(q)_n} = \frac{(-xz)_{\infty}}{(z)_{\infty}},
\end{equation}
and
\begin{equation} \label{JTP}
\sum_{n \in \mathbb{Z}} z^{-n}q^{\binom{n+1}{2}} = (-q/z)_{\infty} (-z)_{\infty} (q)_{\infty} .
\end{equation}
Then, expanding the products $(-azq)_{\infty}$ and $(-bzq)_{\infty}$ using \eqref{qbin1}, the products $1/(z)_{\infty}$ and $1/(abz^2q;q^2)_{\infty}$ using \eqref{qbin2}, and the product $ (-q/z)_{\infty} (-z)_{\infty} (q)_{\infty}$ using \eqref{JTP}, we proceed as follows:
\begin{align}
[z^0]& (-q/z)_{\infty}(-z)_{\infty} (q)_{\infty} \frac{(-azq)_{\infty}(-bzq)_{\infty}}{(z)_{\infty}(abz^2q;q^2)_{\infty}} \nonumber \\
&= [z^0] \sum_{n \in \mathbb{Z}} z^{-n}q^{\binom{n+1}{2}} \sum_{r,s,t,v \geq 0} \frac{z^{r+s+t+2v}q^{\binom{r+1}{2} + \binom{s+1}{2} + v}a^{r+v}b^{s+v}}{(q)_r(q)_s(q)_t(q^2;q^2)_v} \nonumber \\
&= \sum_{r,s,t,v \geq 0} \frac{q^{\binom{r+s+ t + 2v+1}{2} + \binom{r+1}{2} + \binom{s+1}{2} + v}a^{r+v}b^{s+u}}{(q)_r(q)_s(q)_t(q^2;q^2)_v} \label{keyplace} \\
&= \sum_{r,s,t \geq 0} \sum_{v =0}^{\min(r,s)} \frac{q^{\binom{r+s+ t +1}{2} + \binom{r - v+1}{2} + \binom{s -v +1}{2} + v}a^{r}b^{s}}{(q)_{r-v}(q)_{s-v}(q)_t(q^2;q^2)_v} \quad \quad \text{$(r,s) = (r-v, s-v)$} \nonumber \\
&=\sum_{r,s,t \geq 0} \sum_{v =0}^{\min(r,s)} \frac{q^{\binom{r+s+ t +1}{2} + \binom{r +1}{2} + \binom{s +1}{2} + v}a^{r}b^{s}(q^{-r})_v(q^{-s})_v}{(q)_{r}(q)_{s}(q)_t(q^2;q^2)_v} \quad \quad \text{(by \eqref{qnminusk})} \nonumber \\
&= \sum_{r,s,t \geq 0} \frac{q^{\binom{r+s+ t +1}{2} + \binom{r +1}{2} + \binom{s +1}{2} - rs}a^{r}b^{s}(-q)_{r+s}}{(q^2;q^2)_{r}(q^2;q^2)_{s}(q)_t} \quad \quad \text{(by \eqref{qChu})} \nonumber \\
&= \sum_{r,s,t \geq 0} \frac{q^{\binom{t+1}{2} + (r+s)t + r^2+r + s^2+s}a^{r}b^{s}(-q)_{r+s}}{(q^2;q^2)_{r}(q^2;q^2)_{s}(q)_t} \nonumber \\
&= (-q)_{\infty} \sum_{r,s \geq 0} \frac{q^{r^2+r + s^2+s}a^{r}b^{s}}{(q^2;q^2)_{r}(q^2;q^2)_{s}} \quad \quad \text{(by \eqref{qbin1})} \nonumber\\
&= (-q)_{\infty}(-aq^2;q^2)_{\infty}(-bq^2;q^2)_{\infty} \quad \quad \text{(by \eqref{qbin1} with $q=q^2$)}. \nonumber
\end{align}

Having confirmed \eqref{constantterm} we now turn to the combinatorial interpretation of 
\begin{equation}  \label{contributions}
[z^0] (-q/z)_{\infty}(-z)_{\infty} (q)_{\infty} \frac{(-azq)_{\infty}(-bzq)_{\infty}}{(z)_{\infty}(abz^2q;q^2)_{\infty}} = \sum_{r,s,t,v \geq 0} \frac{q^{\binom{r+s+ t + 2v+1}{2} + \binom{r+1}{2} + \binom{s+1}{2} + v}a^{r+v}b^{s+v}}{(q)_r(q)_s(q)_t(q^2;q^2)_v}.
\end{equation}
On the right-hand side, the term $a^rq^{\binom{r+1}{2}}/(q)_r$ contributes a partition $\lambda_a$ into $r$ distinct positive parts labelled $a$, the term $b^sq^{\binom{s+1}{2}}/(q)_s$ contributes a partition $\lambda_b$ into $s$ distinct positive parts labelled $b$, the term $(ab)^vq^v/(q^2;q^2)_v$ contributes a partition $\lambda_{ab}$ into $v$ positive odd parts labelled $ab$, and the term $1/(q)_t$ contributes a partition $\lambda_u$ into non-negative unlabelled parts, to which we shall assign the subscript $u$.   We organize these into one colored jagged partition $\lambda$ containing ``levels'' indexed by the natural numbers $k$ as follows:   First, odd parts $(2k-1)_{ab}$ from $\lambda_{ab}$ are split into pairs $(k-1)_a k_b$ and placed on the right.   Next, any extra $k_b$ from $\lambda_b$ is placed immediately to the left of these pairs.    Then we put all parts of the form $(k-1)_u$, and finally $k_a$ if it appears.    Thus for $k \geq 1$ the general level $k$ resembles
\begin{equation} \label{levelorder}
k_a (k-1)_u \cdots (k-1)_u k_b (k-1)_a k_b \cdots  (k-1)_a k_b .
\end{equation}
The levels are then put together in order from left to right with the smallest on the right.    For example, if $\lambda_a = (4,2,1)$, $\lambda_b = (4,3,1)$, $\lambda_{ab} = (7,7,7,5,1,1,1)$, and $\lambda_u = (1,1,0,0,0)$, we obtain
\begin{equation*}
\lambda = (\underbrace{4_a,4_b,3_a,4_b,3_a,4_b,3_a,4_b}_{\text{level $4$}},\underbrace{3_b,2_a,3_b}_{\text{level $3$}},\underbrace{2_a,1_u,1_u}_{\text{level $2$}},\underbrace{1_a,0_u,0_u,0_u,1_b,0_a,1_b,0_a,1_b,0_a,1_b}_{\text{level $1$}})
\end{equation*}

At this point we observe that the jagged partitions thus constructed are precisely those defined by $\overline{C}(n;k;i,j)$ where all parts are non-overlined, $i = r+v$ is the number of $a$-parts, and $j = s+v$ is the number of $b$-parts.   Indeed, one may check that the minimum difference between consecutive parts is as in the matrix $\overline{C}$ in \eqref{Cappdiffmatrix-1}, while the condition $\lambda_s \not \in \{0_a,0_b\}$ evidently holds.    The only term we have not accounted for in \eqref{contributions} is $q^{\binom{r+s+t+2v+1}{2}}$, but this simply corresponds to adding a staircase to the jagged partition: $1$ to the smallest part, $2$ to the next smallest part, and so on.    The result is a colored partition counted by $C(n;i,j)$, giving
\begin{equation*}
\sum_{r,s,t,v \geq 0} \frac{q^{\binom{r+s+ t + 2v+1}{2} + \binom{r+1}{2} + \binom{s+1}{2} + v}a^{r+v}b^{s+v}}{(q)_r(q)_s(q)_t(q^2;q^2)_v} = \sum_{n,i,j \geq 0} C(n;i,j) a^ib^jq^n,
\end{equation*}
and thus establishing Theorem \ref{th:capa-aag}.   \qed

Before continuing we briefly discuss the possibility of modifying the order \eqref{levelorder} of the parts within a level before adding on the staircase.    The ordering we chose is the one required to obtain Capparelli's identity, but we could use others.  These would give slightly different combinatorial interpretations of \eqref{contributions}.     For example, suppose we put the parts $(k-1)_u$ all the way to the left of the level $k$ instead of between $k_a$ and $k_b$.    Then after adding the staircase onto the jagged overpartition the matrix of difference conditions in \eqref{Cappdiffmatrix} is replaced by
\begin{equation} \label{Cappdiffmatrix*}
C^*=\bordermatrix{\text{} & a & b & u \cr a & 2 & 0 & 3 \cr b & 2 & 2 & 3 \cr u & 0 & 0 & 1}.
\end{equation}
We have also introduced an extra condition due to the placement of $(k-1)_u$ ahead of $k_a$.   Namely, we cannot have a triple of parts with colors $(u,a,b)$ satisfying the the minimal differences $(0,0)$.   

Now, using the usual dilations $q=q^3$, $a=aq^{-2}$ and $b=bq^{-4}$, the matrix \eqref{Cappdiffmatrix*} becomes  
\begin{equation} 
\bordermatrix{\text{} & a & b & u \cr a & 6 & 2 & 7 \cr b & 4 & 6 & 5 \cr u & 2 & 4 & 3},
\end{equation}
and we have the following companion to Capparelli's identity.   
\begin{theorem}
Let $\mathcal{C}^*(n)$ denote the number of partitions of $n$ into parts greater than $1$ such that parts differ by at least $2$, and at least $5$ if neither the larger part nor the sum of the two parts is divisble by $3$, where no three successive parts may differ by $2$.  Let $\mathcal{C}^*(n;i,j)$ denote the number of partitions counted by $\mathcal{C}^*(n)$ having $i$ parts congruent to $1$ modulo $3$ and $j$ parts congruent to $2$ modulo $3$.   Then $\mathcal{C}^*(n;i,j) =\mathcal{D}(n;i,j)$.
\end{theorem}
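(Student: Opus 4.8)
The plan is to reuse the combinatorial construction from the proof of Theorem \ref{th:capa-aag} essentially verbatim, changing only the order of the parts within each level, and then to apply the usual dilation. First I would observe that the identity \eqref{constantterm}, asserting that the sum in \eqref{contributions} equals the product $(-q)_{\infty}(-aq^2;q^2)_{\infty}(-bq^2;q^2)_{\infty}$, is untouched by how the parts of a level are arranged: permuting the parts of a level is a value-preserving (hence $(n,i,j)$-preserving) operation, so the sum in \eqref{contributions} is equally the generating function for the colored partitions produced by the new ordering. By the discussion preceding the theorem, after adding the staircase these colored partitions are exactly those obeying the difference conditions of the matrix $C^*$ in \eqref{Cappdiffmatrix*}, together with the extra restriction that no triple of successive parts may have colors $(u,a,b)$ at minimal differences $(0,0)$. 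Writing $C^*(n;i,j)$ for the number of such colored partitions with $i$ parts of color $a$ and $j$ of color $b$, this gives
\begin{equation*}
\sum_{n,i,j \geq 0} C^*(n;i,j)\,a^i b^j q^n = (-q)_{\infty}(-aq^2;q^2)_{\infty}(-bq^2;q^2)_{\infty}.
\end{equation*}

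Next I would apply the dilation $q \to q^3$ and translations $a \to aq^{-2}$, $b \to bq^{-4}$, under which the colors $a$, $b$, $u$ become the residue classes $1$, $2$, $0$ modulo $3$ and $C^*$ becomes the difference matrix displayed just before the statement. The crucial and only genuinely delicate point is to verify that the word description of $\mathcal{C}^*(n)$ reproduces that matrix exactly. The mechanism is that the stated inequalities interact with the residues to produce the matrix entries automatically: for successive parts $\lambda_i > \lambda_{i+1}$ the residue of $\lambda_i - \lambda_{i+1}$ modulo $3$ is fixed by the residues of the two parts, so the requirement $\lambda_i - \lambda_{i+1} \geq 2$ forces the difference up to the least admissible value in its residue class, namely $2$, $3$, or $4$, while $\lambda_i - \lambda_{i+1} \geq 5$ forces it up to $5$, $6$, or $7$. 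Running through the residue pairs in the two regimes, ``larger part or sum divisible by $3$'' (difference $\geq 2$) versus ``neither divisible by $3$'' (difference $\geq 5$), matches all nine entries of the dilated $C^*$ on the nose. The triple condition also dilates correctly: a $(u,a,b)$ block $m_u\,m_a\,m_b$ at minimal differences $(0,0)$ dilates to $3m,\,3m-2,\,3m-4$, three successive parts each differing by $2$; conversely, under the other difference conditions a part $\equiv 2 \bmod 3$ cannot be followed at difference $2$, so the only way three successive parts can all differ by $2$ is via the residue pattern $0,1,2$, i.e.\ such a block. Thus the prohibition ``no three successive parts may differ by $2$'' is exactly the dilated triple condition.

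Finally I would dispose of the boundary and product sides, both inherited from the Capparelli case. The exclusion of $1_a$ and $1_b$ dilates to the absence of a part equal to $1$, so all parts are $> 1$; since reordering preserves part values, this is unaffected by the new arrangement. The product is identical to that in Theorem \ref{th:capa-aag}, so under the same dilation it becomes the generating function for partitions into distinct parts $\not\equiv \pm 1 \pmod 6$ refined by the number of parts in each residue class modulo $3$, which is precisely $\sum \mathcal{D}(n;i,j)\,a^i b^j q^n$ as in Corollary \ref{cor:capa-aag}. Comparing coefficients of $a^i b^j q^n$ yields $\mathcal{C}^*(n;i,j) = \mathcal{D}(n;i,j)$.

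I expect the main obstacle to be the middle paragraph: carefully confirming the exact agreement between the inequality-based word description and the nine matrix entries under the residue constraints, and checking that the three-term difference-$2$ prohibition is genuinely equivalent to the $(u,a,b)$ triple condition rather than accidentally forbidding or permitting some other configuration.
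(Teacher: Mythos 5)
Your proposal is correct and follows essentially the same route as the paper: the paper likewise observes that permuting parts within a level leaves the generating function \eqref{contributions} unchanged, reads off the new difference matrix $C^*$ together with the extra $(u,a,b)$ triple condition, and then dilates $q \to q^3$, $a \to aq^{-2}$, $b \to bq^{-4}$. The only difference is that you spell out the residue-class verification of the nine matrix entries and the equivalence of the triple condition with the ``three successive differences of $2$'' prohibition, which the paper leaves implicit; your checks there are accurate.
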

  
Such companions were hinted at by Alladi, Andrews, and Gordon \cite{Al-An-Go1}, but apprently never given in detail.    We leave it to the interested reader to explore other modifications of the order \eqref{levelorder}.

\section{A generalized staircase and the proofs of Theorems \ref{th:capa-chelou} and \ref{th:capa-chelounod}}
\label{sec:chelou}
In this section we prove Theorems \ref{th:capa-chelou} and \ref{th:capa-chelounod} by modifying the arguments of the previous section, replacing the staircase by a generalized staircase.    By a generalized staircase we mean a partition $\mu$ into distinct parts of size at most $n$.   Such partitions are generated by $(-q/d)_nd^n$, where the exponent of $d$ counts the number of integers between $1$ and $n$ which do not occur as parts.   When $d=0$ we recover the ordinary staircase $n,\dots,2,1$.  

The generalized staircase is applied to a partition $\lambda$ with $n$ parts as follows:   for each part of size $k$ of $\mu$, we add $1$ to each of the $k$ largest parts of $\lambda$ and then overline the $k$th largest part.    The exponent of $d$ then counts the number of non-overlined parts in the resulting overpartition.    When $d=0$ this amounts to just adding the usual staircase to $\lambda$; $1$ to the smallest part, $2$ to the next smallest part, and so on.

Ideally we would like to remove the staircase corresponding to $q^{\binom{r+s+t+2u+1}{2}}$ in \eqref{contributions} and replace it by a generalized staircase $(-q/d)_{r+s+t+2u}d^{r+s+t+2u}$.    Unfortunately the resulting jagged overpartitions do not seem to have an interesting generating function.   Indeed, since $0_u$ may occur with repetition, an extra (and non-obvious) condition is necessary.

\begin{proof}[Proofs of Theorems \ref{th:capa-chelou} and \ref{th:capa-chelounod}]
We make the replacement
\begin{equation} \label{specialterm}
q^{\binom{r+s+t+2v+1}{2}} \to q^td^{r+s+t+2v}(-q/d)_{r+s+t+2v} (1+q^{r+s+2v+1}) 
\end{equation}
on the right-hand side of \eqref{contributions} to obtain
\begin{equation} \label{newgf}
\sum_{r,s,t,v \geq 0} \frac{q^td^{r+s+t+2v}(-q/d)_{r+s+t+2v} (1+q^{r+s+2v+1}) q^{\binom{r+1}{2} + \binom{s+1}{2} + v}a^{r+v}b^{s+v}}{(q)_r(q)_s(q)_t(q^2;q^2)_v}.
\end{equation}
As noted in the previous section, the term
\begin{equation}
\frac{q^{\binom{r+1}{2} + \binom{s+1}{2} + u}a^{r+v}b^{s+v}}{(q)_r(q)_s(q)_t(q^2;q^2)_v}
\end{equation}
contributes a jagged overpartition $\lambda$ into $r+s+t+2u$ parts counted by $\overline{C}(n;k;i,j)$ where all parts are non-overlined, $i=r+v$ is the number of $a$-parts and $j=s+v$ is the number of $b$-parts.   To account for the remaining term in \eqref{newgf}, we write
\begin{equation}
\begin{aligned}
q^td^{r+s+t+2v}(-q/d)_{r+s+t+2v} (1+q^{r+s+2v+1}) 
&= q^td^{r+s+t+2v}(-q/d)_{r+s+t+2v}  \\ &\qquad + q^{r+s+t+2v+1}d^{r+s+t+2v}(-q/d)_{r+s+t+2v}
\end{aligned}
\end{equation}
and consider two disjoint cases.
In the first case, the term $q^t$ means that we add one to each part colored $u$ in $\lambda$ and rearrange the $u$-colored parts according to \eqref{levelorder}.   Thus we have no $0_u$, and the term $d^{r+s+t+2u}(-q/d)_{r+s+t+2u}$ corresponds to adding a generalized staircase.     For the term $q^{r+s+t+2u+1}d^{r+s+t+2u}(-q/d)_{r+s+t+2u}$, we insert a part $0_u$ in $\lambda$ and then we apply the generalized staircase to the resulting partition almost as usual, except that the term $q^{r+s+t+2u+1}$ corresponds to the smallest part necessarily being overlined.    Note that the presence of a part $0_u$ before adding on the generalized staircase can be detected after the adding of the generalized staircase by the condition that the smallest uncolored part $x_u$ is equal to the number of overlined parts coming after $x_u$ (with $x_u$ included).   This is because each time the part which was originally $0_u$ is augmented by $1$, either it or a part after it becomes overlined.    Thus, taking the two cases together, we have 
\begin{equation} \label{Cbar14vargf}
\begin{aligned}
\sum_{n,k,i,j \geq 0} &\overline{C}_1(n;k;i,j) a^ib^jd^kq^n \\
&= \sum_{r,s,t,v \geq 0} \frac{q^td^{r+s+t+2v}(-q/d)_{r+s+t+2v} (1+q^{r+s+2v+1}) q^{\binom{r+1}{2} + \binom{s+1}{2} + v}a^{r+v}b^{s+v}}{(q)_r(q)_s(q)_t(q^2;q^2)_v}.
\end{aligned}
\end{equation}      

To obtain the $q$-series on the right-hand side of \eqref{eq:capa-chelou}, we begin with the right-hand side above and follow the same steps as in the previous section after \eqref{keyplace}:
\begin{align*}
\sum_{r,s,t,v \geq 0} & \frac{q^td^{r+s+t+2v}(-q/d)_{r+s+t+2v} (1+q^{r+s+2v+1}) q^{\binom{r+1}{2} + \binom{s+1}{2} + v}a^{r+v}b^{s+v}}{(q)_r(q)_s(q)_t(q^2;q^2)_v} \\
&= \sum_{r,s,t \geq 0} \sum_{v=0}^{\min(r,s)} \frac{q^td^{r+s+t}(-q/d)_{r+s+t} (1+q^{r+s+1}) q^{\binom{r+1}{2} + \binom{s+1}{2} + v}a^{r}b^{s}(q^{-r})_v(q^{-s})_v}{(q)_r(q)_s(q)_t(q^2;q^2)_v}  \\
&= \sum_{r,s,t \geq 0} \frac{q^td^{r+s+t}(-q/d)_{r+s}(-q^{r+s+1}/d)_t (-q)_{r+s+1} q^{\binom{r+1}{2} + \binom{s+1}{2} -rs}a^{r}b^{s}}{(q^2;q^2)_r(q^2;q^2)_s(q)_t} \\
&= \frac{(-q)_{\infty}}{(dq)_{\infty}} \sum_{r,s \geq 0} \frac{q^{r^2+r+s^2+s - \binom{r+s+1}{2}} a^rb^s d^{r+s} (-q/d)_{r+s}}{(q^2;q^2)_r(q^2;q^2)_s} \quad \quad \text{(by \eqref{qbin2})} .
\end{align*}

To obtain the product in \eqref{eq:capa-over} we set $d=1$ in \eqref{Cbar14vargf}, carry out the sum over $t$ directly and then the sums over $r,s$, and $v$:
\begin{align*}
\sum_{r,s,t,v \geq 0} & \frac{q^t(-q)_{r+s+t+2v} (1+q^{r+s+2v+1}) q^{\binom{r+1}{2} + \binom{s+1}{2} + v}a^{r+v}b^{s+v}}{(q)_r(q)_s(q)_t(q^2;q^2)_v} \\
&= \sum_{r,s,t,v \geq 0} \frac{q^t(-q)_{r+s+2v} (-q^{r+s+2v+1})_t(1+q^{r+s+2v+1}) q^{\binom{r+1}{2} + \binom{s+1}{2} + v}a^{r+v}b^{s+v}}{(q)_r(q)_s(q)_t(q^2;q^2)_v} \\
&= \frac{(-q)_{\infty}}{(q)_{\infty}} \sum_{r,s,v \geq 0}  \frac{q^{\binom{r+1}{2} + \binom{s+1}{2} + v}a^{r+v}b^{s+v}}{(q)_r(q)_s(q^2;q^2)_v} \\
&= \frac{(-q)_{\infty}(-aq)_{\infty}(-bq)_{\infty}}{(q)_{\infty}(abq;q^2)_{\infty}}.
\end{align*}

\end{proof}

\section{Partial staircases and the proofs of Theorems \ref{th:capa-asym1} -- \ref{th:capa-asym2} and \ref{th:capa-asym1nod} -- \ref{th:capa-asym2nod}}
\label{sec:asym}  
In this section we modify the proof in Section 2 in a slightly different way from the previous section, by using partial staircases in place of a staircase or generalized staircase.    By a partial staircase we mean a generalized staircase which is a classical staircase at the top for a predetermined number of parts.      We consider two types of partial staircases, leading to Theorems \ref{th:capa-asym1} and \ref{th:capa-asym2}.   

\begin{proof}[Proof of Theorems \ref{th:capa-asym1} and \ref{th:capa-asym1nod}]
First, we make the replacement 
\begin{equation}
q^{\binom{r+s+t+2v+1}{2}} \to q^{\binom{r+s+t+2v+1}{2} - \binom{r+v+1}{2}} d^{r+v}(-q/d)_{r+v}
\end{equation}
on the right-hand side of \eqref{contributions}.     This means we have a partial staircase which is a staircase for the first $s+t+v$ parts and then a generalized staircase for the final $r+v$ parts.   Note that $s+t+v$ is the number of $b$-parts plus the number of $u$-parts.    Combinatorially, we will obtain jagged overpartitions corresponding to $\overline{C}(n;k;i,j)$ with the condition that the smallest $\ell$ parts are overlined, where $\ell = s+t+v$.    In other words, we have jagged overpartitions counted by $\overline{C}_2(n;k;i,j)$.     Using the usual $q$-series arguments we then obtain
\begin{align*}
\sum_{n,k,i,j \geq 0} &\overline{C}_2(n;k;i,j)a^ib^jd^kq^n  \\
&= \sum_{r,s,t,v \geq 0} \frac{q^{\binom{r+s+t+2v+1}{2} - \binom{r+v+1}{2}} d^{r+v}(-q/d)_{r+v}q^{\binom{r+1}{2} + \binom{s+1}{2} + v}a^{r+v}b^{s+v}}{(q)_r(q)_s(q)_t(q^2;q^2)_v} \\
&= \sum_{r,s,t \geq 0} \sum_{v=0}^{\min(r,s)} \frac{q^{rs+rt+st+\binom{s+1}{2} + \binom{t+1}{2} + \binom{r-v+1}{2} + \binom{s-v+1}{2} + v}a^rb^sd^r(-q/d)_r}{(q)_{r-v}(q)_{s-v}(q)_t(q^2;q^2)_v} \\
&= \sum_{r,s,t \geq 0} \sum_{v=0}^{\min(r,s)} \frac{q^{rs+rt+st+s^2+s + \binom{t+1}{2} + \binom{r+1}{2} + v}a^rb^sd^r(-q/d)_r(q^{-r})_v(q^{-s})_v}{(q)_{r}(q)_{s}(q)_t(q^2;q^2)_v} \\
&= \sum_{r,s,t \geq 0} \frac{q^{rt+st+s^2+s+ \binom{t+1}{2} + \binom{r+1}{2}}a^rb^sd^r(-q/d)_r(-q)_{r+s}}{(q^2;q^2)_{r}(q^2;q^2)_{s}(q)_t} \\
&= (-q)_{\infty} \sum_{s \geq 0} \frac{b^sq^{s^2+s}}{(q^2;q^2)_s}\sum_{r \geq 0} \frac{a^rd^r(-q/d)_rq^{\binom{r+1}{2}}}{(q^2;q^2)_r},
\end{align*}
and evaluating the sum over $s$ then completes the proof of Theorem \ref{th:capa-asym1}.    Theorem \ref{th:capa-asym1nod} then follows immediately by evaluating the sum over $r$ when $d=1$.
\end{proof}

\begin{proof}[Proof of Theorems \ref{th:capa-asym2} and \ref{th:capa-asym2nod}]
One could also make the replacement
\begin{equation}
q^{\binom{r+s+t+2v+1}{2}} \to q^{\binom{r+s+t+2v+1}{2} - \binom{r+s+2v+1}{2}} d^{r+s+2v}(-q/d)_{r+s+2v}
\end{equation}
on the right-hand side of \eqref{contributions}.     This corresponds to a partial staircase which is a staircase for the first $t$ parts and then a generalized staircase for the final $r+s+2v$ parts.   Note that $t$ is the number of $u$-parts and so we will obtain jagged overpartitions corresponding to $\overline{C}(n;k;i,j)$ with the condition that the smallest $t$ parts are overlined.    In other words, we have jagged overpartitions counted by $\overline{C}_3(n;k;i,j)$.     Using the usual $q$-series arguments we then obtain
\begin{align*}
\sum_{n,k,i,j \geq 0} &\overline{C}_3(n;k;i,j)a^ib^jd^kq^n  \\
&= \sum_{r,s,t,v \geq 0} \frac{q^{\binom{r+s+t+2v+1}{2} - \binom{r+s+2v+1}{2}} d^{r+s+2v}(-q/d)_{r+s+2v}q^{\binom{r+1}{2} + \binom{s+1}{2} + v}a^{r+v}b^{s+v}}{(q)_r(q)_s(q)_t(q^2;q^2)_v} \\
&= \sum_{r,s,t,v \geq 0} \frac{q^{rt+st+2tv + \binom{t+1}{2} + \binom{r+1}{2} + \binom{s+1}{2} + v}a^{r+v}b^{s+v}d^{r+s+2v}(-q/d)_{r+s+2v}}{(q)_{r}(q)_{s}(q)_t(q^2;q^2)_v} \\
&= \sum_{r,s,t \geq 0} \sum_{v=0}^{\min(r,s)} \frac{q^{rt+st + \binom{t+1}{2} + \binom{r-v+1}{2} + \binom{s-v+1}{2} + v}a^{r}b^{s}d^{r+s}(-q/d)_{r+s}}{(q)_{r-v}(q)_{s-v}(q)_t(q^2;q^2)_v} \\
&= \sum_{r,s,t \geq 0} \sum_{v=0}^{\min(r,s)} \frac{q^{rt+st + \binom{t+1}{2} + \binom{r+1}{2} + \binom{s+1}{2}+ v}a^rb^sd^{r+s}(-q/d)_{r+s}(q^{-r})_v(q^{-s})_v}{(q)_{r}(q)_{s}(q)_t(q^2;q^2)_v} \\
&= \sum_{r,s,t \geq 0} \frac{q^{rt+st+ \binom{t+1}{2} + \binom{r+1}{2} + \binom{s+1}{2}}a^rb^sd^{r+s}(-q/d)_{r+s}(-q)_{r+s}}{(q^2;q^2)_{r}(q^2;q^2)_{s}(q)_t} \\
&= (-q)_{\infty} \sum_{r,s \geq 0 } \frac{q^{r^2+s^2 - \binom{r+s+1}{2}}a^rb^sd^{r+s}(-q/d)_{r+s}}{(q^2;q^2)_r(q^2;q^2)_s},
\end{align*}
which is Theorem \ref{th:capa-asym2}.    

For Theorem \ref{th:capa-asym2nod} we set $d=1$ in the third line of the above string of equations, evaluate the sum over $t$ and then the sums over $u$, $a$, and $b$ as follows:
\begin{align*}
\sum_{r,s,t,v \geq 0} &\frac{q^{rt+st+2tv + \binom{t+1}{2} + \binom{r+1}{2} + \binom{s+1}{2} + v}a^{r+v}b^{s+v}(-q)_{r+s+2v}}{(q)_{r}(q)_{s}(q)_t(q^2;q^2)_v} \\
&= (-q)_{\infty} \sum_{r,s,v \geq 0} \frac{q^{\binom{r+1}{2} + \binom{s+1}{2} + v}a^{r+v}b^{s+v}}{(q)_{r}(q)_{s}(q^2;q^2)_v} \\
&= \frac{(-q)_{\infty}(-aq)_{\infty}(-bq)_{\infty}}{(abq;q^2)_{\infty}}.
\end{align*}

\end{proof}

\section{Some combinatorics and examples}
We begin this section by noting the combinatorial interpretation of the two sums in Theorems \ref{th:capa-chelou} -- \ref{th:capa-asym2}.    First, the sum in \eqref{eq:capa-asym1} is quite straightforward.     The term $a^r/(q^2;q^2)_r$ generates a partition into $r$ non-negative even $a$-parts, and then adding the staircase from $q^{\binom{r+1}{2}}$ gives that $a^rq^{\binom{r+1}{2}}/(q^2;q^2)_r$ generates a partition $\lambda$ into $r$ distinct $a$-parts which alternate in parity.   Here (and throughout this discussion) we assume that there is a ``phantom" $0$ at the end of the partition to account for the parity of the smallest part.   Adding the generalized staircase $d^r(-q/d)_r$ in the usual way then gives an overpartition into distinct parts such that two consecutive parts have different parities if and only if the larger is non-overlined.   The presence of a non-overlined part being equivalent to a parity change, the overlining is in fact unnecessary here.    Let $A(n;k,i)$ denote the number of such partitions of $n$ with $i$ parts and $k$ changes in parity.   Then we have shown that   
\begin{equation} \label{combrsum}
\sum_{n,k,i \geq 0} A(n;k,i)q^na^id^k = \sum_{r \geq 0} \frac{q^{\binom{r+1}{2}}(ad)^r(-q/d)_r}{(q^2;q^2)_r}.
\end{equation}
Note that when $d=0$ there are no changes in parity (or equivalently, only overlined parts), the generating function is $(-aq^2;q^2)_{\infty}$, and Theorem \ref{th:capa-asym1} becomes Theorem \ref{th:capa-aag}.    On the other hand, when there are no restrictions on parity changes (or equivalently, the presence of non-overlined parts), the generating function is $(-aq)_{\infty}$ and we have Theorem \ref{th:capa-asym1nod}.

The double sum in equations \eqref{eq:capa-chelou} and \eqref{eq:capa-asym2} is a little more subtle, but again we will see that the exponent of $d$ counts the number of parity changes.    To begin, the term $a^rb^sq^{r^2+r+s^2+s}/(q^2;q^2)_r(q^2;q^2)_s$ generates a partition $\lambda$ into $r$ distinct even $a$-parts and $s$ distinct even $b$-parts, ordered with the convention that $k_a > k_b$.    The term $q^{-\binom{r+s+1}{2}}$ then means that we remove a staircase of size $r+s$ from $\lambda$.  The resulting partition can be grouped into the same levels as in \eqref{levelorder}, with the extra conditions that there are no $u$-parts and that parts must alternate in parity.    For example, if we begin with  
$$
\lambda = (18_b,16_b,14_a,14_b,12_a,10_a,8_b,4_a,4_b,2_a,2_b),
$$ 
then after removing the staircase we have
$$
(7_b,6_b,5_a ,6_b,5_a,4_a,3_b,0_a,1_b,0_a,1_b).
$$
Now, adding back on the generalized staircase $d^{r+s}(-q/d)_{r+s}$ gives a jagged overpartition counted by $\overline{C}(n;k;i,j)$, where there are no $u$-parts and consecutive parts alternate in parity if and only if the larger is non-overlined.     Let $\overline{C}_4(n;k;i,j)$ denote the number of such jagged overpartitions.   Then we have shown that
\begin{equation} \label{comrssum}
\sum_{n,k,i,j \geq 0} \overline{C}_4(n;k;i,j) a^ib^jd^kq^n = \sum_{r,s \geq 0} \frac{q^{r^2+r+s^2+s - \binom{r+s+1}{2}} a^rb^s d^{r+s} (-q/d)_{r+s}}{(q^2;q^2)_r(q^2;q^2)_s}.
\end{equation}
Obviously when $d=0$ we have the generating function $(-aq^2;q^2)_{\infty}(-bq^2;q^2)_{\infty}$, accounting for Theorem \ref{th:capa-aag}.   On the other hand, when  $d=1$, the number of changes of parity (or equivalently, the number of non-overlined parts) is not restricted anymore.   In this case the pairs $x_a (x-1)_b$ become odd parts $(2x-1)_{ab}$ while any remaining unpaired $y_a$ or $y_b$ are used to build two partitions into distinct parts, giving the generating function  
\begin{equation}
\frac{(-aq)_{\infty}(-bq)_{\infty}}{(abq;q^2)_{\infty}}.
\end{equation}

We turn now to an illustration of Corollaries \ref{cor1} -- \ref{cor3}, beginning with Corollary \ref{cor1}.   Note that the  interpretation of $k$ in the definition of the $\overline{\mathcal{D}}(n;k;i,j)$ comes from the combinatorial interpretation of \eqref{combrsum} above.     First, the thirteen jagged overpartitions of $13$ counted by $\overline{\mathcal{C}}(13;k;i,j)$ are
$$
\begin{gathered}
(\o{13}), (13), (\o{11},\o{2}), (\o{10},\o{3}), (10,\o{3}), (\o{10},1,\o{2}), (10,1,\o{2}), \\
(\o{9},\o{4}), (9,\o{4}), (7,\o{6}), (7,\o{4},\o{2}), (7,4,\o{2}), (4,5,\o{4}),
\end{gathered}
$$
while the thirteen partitions into distinct parts not congruent to $5 \mod 6$ counted by $\overline{\mathcal{D}}(13;k;i,j)$ are 
$$
\begin{gathered}
(13), (12,1), (10,3), (10,2,1), (9,4), (9,3,1), (8,4,1), \\
(8,3,2), (7,6), (7,4,2), (7,3,2,1), (6,4,3), (6,4,2,1).
\end{gathered}
$$
One may then easily verify that for any choice of $k,i,j$,
$$\overline{\mathcal{C}}(13;k;i,j)=\overline{\mathcal{D}}(13;k;i,j).$$
For example, for $k=1$, $i=1$ and $j=0$, we have 
$$\overline{\mathcal{C}}(13;1;1,0)=\overline{\mathcal{D}}(13;1;1,0)=4,$$
the overpartitions counted by $\overline{\mathcal{C}}(13;1;1,0)$ being
$$(13), (10,\o{3}), (9,\o{4}), (7,\o{6})$$
and the partitions counted by $\overline{\mathcal{D}}(13;1;1,0)$ being
$$(13), (12,1), (9,3,1), (7,6).$$

Let us now give an example for Corollary \ref{cor2}.    
First, the twelve jagged overpartitions of $11$ counted by $\overline{\mathcal{C}}'(11;i,j)$ are
$$
\begin{gathered}
(\o{9},2), (9,2), (\o{8},1,2), (8,1,2), (6,5), (6,2,1,2),(5,\o{6}), (5,6),\\
(5,1,2,1,2), (4,5,2), (4,4,1,2), (2,1,2,1,2,1,2),
\end{gathered}
$$
while the twelve overpartitions counted bx $\overline{\mathcal{D}}'(11;i,j)$ are 
$$
\begin{gathered}
(11), (\o{9},\o{2}), (\o{8},3), (8,3), (\o{6},\o{5}), (\o{6},3,\o{2}),\\
(\o{5},3,3), (\o{5},\o{4},\o{2}), (\o{5},4,\o{2}), (4,\o{4},3), (4,4,3), (3,3,3,\o{2}).
\end{gathered}
$$
For any choice of $i,j$, we obtain
$$\overline{\mathcal{C}}'(11;i,j)= \overline{\mathcal{D}}'(11;i,j).$$

We conclude with an example for Corollary \ref{cor3}.    
First, the eight jagged overpartitions of $11$ counted by $\overline{\mathcal{C}}''(11;i,j)$ are
$$
\begin{gathered}
(\o{9},2), (9,2), (6,5), (6,2,1,2),\\
(5,\o{6}), (5,6), (5,1,2,1,2), (2,1,2,1,2,1,2),
\end{gathered}
$$
while the twelve overpartitions counted bx $\overline{\mathcal{D}}''(11;i,j)$ are 
$$
\begin{gathered}
(11), (\o{9},\o{2}), (\o{8},3), (\o{6},\o{5}), (\o{6},3,\o{2}),\\
(\o{5},3,3), (\o{5},\o{4},\o{2}), (3,3,3,\o{2}).
\end{gathered}
$$
Again, for any choice of $i,j$, it is not hard to verify that
$$\overline{\mathcal{C}}''(11;i,j)= \overline{\mathcal{D}}''(11;i,j).$$

\section{Concluding Remarks}
Our use of generalized and partial staircases to generalize the Capparelli identity is motivated by the second author's generalizations of a partition theorem of Schur \cite{Lo2, Lo1}.    In the case of Schur's theorem one obtains ``perfect'' generalizations in the sense that the generating functions for the relevant overpartitions are infinite products without specializing the extra variable $d$.    This is in contrast with our Theorems \ref{th:capa-chelou} -- \ref{th:capa-asym2}, where we have an infinite product for $d=0$ and $d=1$, but not for all $d$.    Perhaps the most natural question arising from this work is whether generalized and/or partial staircases yield interesting results in the context of other partition identities which have been studied using the method of weighted words, such as G\"ollnitz' (big) theorem \cite{Al-An-Go1},  Siladi\'c's identity \cite{Do1}, Primc's identity \cite{Do-Lo1}, or the Alladi-Andrews-Berkovich identity \cite{Al-An-Be1}.    The only result we are aware of in this direction is an ``imperfect'' generalization of G\"ollnitz' theorem in \cite{Lo2} using a generalized staircase.


\begin{thebibliography}{99}

\bibitem{Al-An-Be1}
K. Alladi, G. E. Andrews, and A. Berkovich, A new four parameter $q$-series identity and its partition implications, \emph{Invent. Math.} {\bf 153} (2003), no. 2, 231--260.

\bibitem{Al-An-Go.5}
K. Alladi, G. E. Andrews, and B. Gordon, Generalizations and refinements of a partition theorem of G\"ollnitz, \emph{J. Reine Angew. Math.} {\bf 460} (1995), 165--188.

\bibitem{Al-An-Go1}
K. Alladi, G. E. Andrews, and B. Gordon, Refinements and generalizations of Capparelli's conjecture on partitions, \emph{J. Algebra} {\bf 174} (1995), 636--658.

\bibitem{Andrews1}
G. E. Andrews, {S}chur’s theorem, {C}apparelli’s conjecture and $q$-trinomial coefficients, \textit{Contemp. Math.}, \textbf{166} (1992), 141--154.

\bibitem{Be-Un1}
A. Berkovich and A. K. Uncu, A new companion to Capparelli's identities, \emph{Adv. in Appl. Math.} {\bf 71} (2015), 125--137.

\bibitem{Br-Ma1}
K. Bringmann and K. Mahlburg,  False theta functions and companions to Capparelli's identities, \emph{Adv. Math.} {\bf 278} (2015), 121--136.

\bibitem{Capparelli1}
S. Capparelli, On some representations of twisted affine Lie algebras and combinatorial identities, \textit{J. Algebra}, \textbf{154} (1993), 335--355.

\bibitem{Capparelli2}
S. Capparelli, A construction of the level $3$ modules for the affine Lie algebra $A_2^{(2)}$ and a new combinatorial identity of the Rogers-Ramanujan type, \textit{Trans. Amer. Math. Soc.}, \textbf{348} (1996), 481--501.

\bibitem{Do1}
J. Dousse, Siladi\'c's theorem: weighted words, refinement and companion , \emph{Proc. Amer. Math. Soc.}, to appear.

\bibitem{Do-Lo1}
J. Dousse and J. Lovejoy, On a Rogers-Ramanujan type identity from crystal base theory, \emph{Proc. Amer. Math. Soc.}, to appear.


\bibitem{Fo-Ja-Ma1}
J.-F. Fortin, P. Jacob, and P. Mathieu, Jagged partitions, \emph{Ramanujan J.} {\bf 10} (2005), no. 2, 215--235.

\bibitem{Fu-Ze1}
S. Fu and J. Zeng, A unifying combinatorial approach to refined little Göllnitz and Capparelli's companion identities, preprint, \url{arXiv:1603.07068v2}. 

\bibitem{Ga-Ra1}
G. Gasper and M. Rahman, Basic Hypergeometric Series, Second edition, Encyclopedia of Mathematics and its Applications 96,  Cambridge University Press, Cambridge, 2004.

\bibitem{Lepowsky}
J. Lepowsky and R. Wilson, The structure of standard modules, {I}:
  Universal algebras and the {R}ogers-{R}amanujan identities, \emph{Invent. Math.}, \textbf{77} (1984), 199--290.

\bibitem{Lo.5}
J. Lovejoy, Constant terms, jagged partitions, and partitions with difference two at distance two, \emph{Aequationes Math.} {\bf 72} (2006), no. 3, 299--312.

\bibitem{Lo2}
J. Lovejoy, A theorem on seven-colored overpartitions and its applications, \textit{Int. J. Number Theory} \textbf{1} (2005), 215--224.

\bibitem{Lo1}
J. Lovejoy, Asymmetric generalizations of Schur's theorem, \emph{Proceedings of the Alladi60 conference}, to appear.


\bibitem{Meurman}
A. Meurman and M. Primc, Annihilating ideals of standard modules of {$sl(2,{\mathbb C})^\sim$} and combinatorial identities, \textit{Adv.  Math.}, \textbf{64} (1987), 177--240.

\bibitem{Primc}
M. Primc, Some crystal {R}ogers-{R}amanujan type identities, \textit{Glas. Math. Ser. III}, \textbf{34} (1999), 73--86.

\bibitem{RogersRamanujan}
L. J. Rogers and S. Ramanujan, Proof of certain identities in combinatory analysis, \emph{Cambr. Phil. Soc. Proc.}, \textbf{19} (1919), 211--216.


\bibitem{Siladic}
I. Siladi\'c, Twisted {$sl(3,\mathbb{C})^{\sim}$}-modules and combinatorial identities, \url{arXiv:math/0204042v2}.

\bibitem{Si1}
A. V. Sills, On series expansions of Capparelli's infinite product, \emph{Adv. in Appl. Math.} {\bf 33} (2004), no. 2, 397--408.

\bibitem{Xie}
M. Tamba and C. Xie, Level three standard modules for $A_2^{(2)}$ and combinatorial identities, \textit{J. Pure Applied Algebra}, \textbf{105} (1995), 53--92.


\end{thebibliography}
\end{document}